\theoremstyle{plain}
\theoremstyle{plain}
\newtheorem{thm}{Theorem}
\newtheorem{prop}{Proposition}[section]
\newtheorem{lem}{Lemma}[section]
\newtheorem{nota}{Notation}[section]
\newtheorem{cor}{Corollary}[section]
\theoremstyle{definition}
\newtheorem{Def}{Definition}[section]
\newtheorem{exm}{Example}[section]
\theoremstyle{remark}
\newtheorem{rem}{Remark}[section]
\newtheoremstyle{restate}
{\topsep}
{\topsep}
{\itshape}
{}
{\bfseries}
{.}
{ }
{\thmname{#1}\thmnote{ #3}}
\theoremstyle{restate}
\newtheorem{thm*}{Theorem}
\newtheorem{prop*}{Proposition}
\newtheorem{lem*}{Lemma}
\newtheorem{add*}{Addendum}
\newtheorem{cor*}{Corollary}
\newtheorem{pte*}{Property}
\theoremstyle{definition}
\newtheorem{Def*}{Definition}
\newtheorem{exm*}{Example}
\theoremstyle{remark}
\newtheorem{rem*}{Remark}
\theoremstyle{definition}
\theoremstyle{remark}
\newcommand{\Id}{\mathop{\hbox{{\rm Id}}}\nolimits}
\newcommand{\Inf}{\mathop{\rm Inf\,}\limits}
\newcommand{\Dp}[2]{\frac{\partial #1}{\partial #2}}
\newcommand{\ha}{\widehat}
\newcommand{\wti}{\widetilde}
\newcommand{\N}{\mathbb N}
\newcommand{\Z}{\mathbb Z}
\newcommand{\R}{\mathbb R}
\newcommand{\T}{\mathbb T}
\newcommand{\U}{\mathbb U}
\newcommand{\Cal}{\mathcal}
\newcommand{\mcr}{\mathscr}
\newcommand{\demi}{\frac{1}{2}}
\newcommand{\Log}{{\rm Log\,}}
\newcommand{\setm}{\setminus}
\DeclareMathOperator{\Vol}{Vol}
\newcommand{\Aa}{\Cal{A}}
\newcommand{\Ss}{\Cal S}
\newcommand{\Tt}{\Cal T}
\newcommand{\Dd}{\Cal D}
\newcommand{\Cc}{\Cal C}
\newcommand{\Pp}{\Cal P}
\newcommand{\Hh}{\Cal H}
\newcommand{\Rr}{\Cal R}
\newcommand{\Ii}{\Cal I}
\newcommand{\jA}{\mcr A}
\newcommand{\CC}{\mcr C}
\newcommand{\TT}{\mcr T}
\newcommand{\EE}{\mcr E}
\newcommand{\DD}{\mcr D}
\newcommand{\LL}{\mcr L}
\newcommand{\PP}{\mcr P}
\newcommand{\II}{\mcr I}
\newcommand{\MM}{\mcr M}
\newcommand{\UU}{\mcr U}
\newcommand{\RR}{\mcr R}
\newcommand{\al}{\alpha}
\newcommand{\ga}{\gamma}
\newcommand{\sig}{\sigma}
\newcommand{\eps}{\varepsilon}
\newcommand{\Ga}{\Gamma}
\renewcommand{\th}{\theta}
\newcommand{\Sig}{\Sigma}
\newcommand{\om}{\omega}
\newcommand{\Om}{\Omega}
\newcommand{\lam}{\lambda}
\newcommand{\de}{\delta}
\newcommand{\rit}{\rightarrow}
\newcommand{\ma}{\mapsto}
\newcommand{\inv}{^{-1}}
\newcommand{\hto}{{\rm{h_{top}}}}
\newcommand{\hp}{{\rm{h_{pol}}}}
\newcommand{\hvo}{{\rm{h_{vol}}}}
 \newcommand{\8}{\scalebox{1.3}{$\infty$}}
\newcommand{\pp}{^{++}}
\newcommand{\pl}{^{+-}}
\newcommand{\lp}{^{-+}}
\newcommand{\mm}{^{--}}
\newcommand{\sss}{^{**}}
\begin{document}
\selectlanguage{english}

\author{Clémence Labrousse}
\title[Flat metrics minimize the polynomial entropy]{Flat metrics are strict local minimizers for the polynomial entropy}
\thanks{Institut de Mathématiques de Jussieu, UMR 7586, {\em Analyse algébrique},
175 rue du Chevaleret, 75013 Paris.
email: labrousse@math.jussieu.fr}

\date{}

\maketitle

\begin{abstract} 
As we have proved in \cite{L-1}, the geodesic flows associated with the flat metrics on $\T^2$  minimize the polynomial entropy $\hp$. In this paper, we show that, among the geodesic flows that are Bott integrable and dynamically coherent, the geodesic flows associated to flat metrics are local \emph{strict} minima for $\hp$. To this aim, we prove a graph property for invariant Lagrangian tori in near-integrable systems.
\end{abstract}


\section{Introduction}
Let $(M,g)$ be a compact Riemannian manifold. The \emph{Hamiltonian geodesic flow} $\phi_g$ is the flow associated with the geodesic Hamiltonian $H$ on $T^*M$ defined by
\[
\begin{matrix}
H\;\; : & T^*M & \longrightarrow &  \R\\
  & (m,p) & \mapsto & g_m^*(p,p),
 \end{matrix}
\]
where $g_m^*$ is deduced from $g_m$ by the Legendre transform. Throughout this paper, we will  consider only the restriction of $\phi_g$ to the unit cotangent bundle of $M$, which we still denote by $\phi_g$.

There are several ways to measure the \emph{complexity} of the geodesic flow $\phi_g$, one of them being the growth rate of the volume of balls in the Riemannian covering $\wti{M}$ of $M$. We recall that the volumic entropy $\hvo(g)$ of $(M,g)$ is defined as: 
\[
\hvo(g):=\limsup_{r\rit\infty}\frac{1}{r}\Log\Vol B(x,r),
\]
where $B(x,r)$ is the ball in $\wti{M}$ centered at $x$ and  of radius $r$. 
Another way to determine the complexity of $\phi_g$ is the topological entropy. Given a compact metric space $(X,d)$ and a continuous flow $\phi:\R\times X\rit X: (t,x)\ma\phi_t(x)$, for each $t>0$, one  defines the dynamical metric
\begin{equation}
d^{\phi}_t(x,y)=\max_{0\leq k\leq t}d(\phi_t(x),\phi_t(y)).
\end{equation}
All the metrics $d^{\phi}_t$ are equivalent to $d$. In particular, $(X,d^{\phi}_t)$ is compact. So, for any $\varepsilon>0$, $X$ can be covered by a finite number of balls of radius $\varepsilon$ for $d^{\phi}_t$. Let $G_t(\varepsilon)$ be the minimal cardinal of such a covering.
The topological entropy of $\phi$ is defined by:
\[
\hto(\phi)=\lim_{\varepsilon\rightarrow 0}\limsup_{t\rightarrow\infty}\frac{1}{t}\log G_t(\varepsilon).
\]
Manning proved in \cite{M-79} that $\hvo(g)\leq \hto(\phi_g)$, with equality if $(M,g)$ has sectional  non positive curvature.

Besson, Courtois and Gallot (\cite{BCG-95},\cite{BCG-96}) proved that locally symetric metrics of negative curvature (and in particular hyperbolic metrics) are strict minima for $\hto$. This result was already proved in dimension $2$ by Katok in \cite{Kat}.
Therefore, one knows the ``simplest'' metrics for surfaces with genus $\geq 2$ and we would like to find the metrics that minimize the complexity for surfaces with genus $1$. 

A first remark is that the volume growth is quadratic, and that the topological entropy may vanish. Indeed, a flat metric on $\T^2$ is an equality case in Manning's formula. Hence, since the growth of the volume is quadratic, the entropy is zero. Flat metrics are not the only ones for which the topological entropy vanishes. Indeed, Paternain proved in \cite{Pat-dim4} that if the geodesic flow admits a first integral $f$ such that the critical points of $f$ (in restriction to the unit cotangent bundle) form strict submanifolds, then its topological entropy vanishes.

This leads us to consider a ``polynomial measure'' of the complexity, namely the \emph{polynomial entropy} $\hp$, defined below, and to search the minimizers of $\hp$ among the metrics $g$ whose geodesic flow $\phi_g$ possesses a first integral that satisfies Paternain's hypotheses.

 The polynomial entropy is defined as follows:
\[
\hp(\phi)=\lim_{\varepsilon\rightarrow 0}\limsup_{t\rightarrow\infty}\frac{1}{\log t}\log G_t(\varepsilon)=\Inf\left\{\sig\geq 0\,|\, \lim_{t\to\infty}\frac{1}{t^\sig}G_t^\phi(\eps)=0\right\},
\]
where, as before, $\phi$ is a flow on a compact metric space $X$. We refer to \cite{Mar-09} for a complete introduction. 
In \cite{LM}, we proved that if $H$ is a Hamiltonian system on $T^*\T^n$ (with the canonical symplectic form $d\th\wedge dr$)  which is in action-angle form (that is, $H(\th,r)=h(r)$, for a smooth map $h:\R^n\rit \R$) then the polynomial entropy in restriction to a compact energy level $\EE=H\inv(\{e\})$ of $H$ is
\[
\hp(\phi_H,\EE)=\max_{r\in h\inv(\{e\})}\partial^2\left(h_{|h\inv(\{e\})}\right)(r).
\]
An immediate consequence of this result is that if $g_0$ is a flat metric on $\T^n$, then $\hp(\phi_{g_0})=n-1$.

Now, for a Riemannian manifold $(M,g)$ we set
\[
\tau(M)=\limsup_{r\rit\infty}\frac{\log \Vol B(x,r) }{\log r }=\inf\left\{s\geq 0\,|\,\limsup_{r\rit\infty}\frac{1}{r^s}\Vol B(x,r)=0\right\}.
\]
One checks that $\tau(M)$ is independent of $x$. Indeed, it is the  degree of growth of the fundamental group $\pi_1(M)$. In \cite{L-1}, we proved that, denoting by $\phi_g$ the geodesic flow of $g$:
\[
\tau(M)\leq \hp(\phi_g)+1.
\]
Therefore, since $\tau(\T^n)=n$ , the flat metrics on $\T^n$ do minimize $\hp$.

Finally, in \cite{LM}, we showed that if a Hamiltonian flow $\phi_H$ on a $4$-dimen\-sional symplectic manifold possesses a first integral, independent of $H$, that is \emph{nondegenerate in the Bott sense} on a compact regular level $\EE$ of $H$ and that satisfies an additionnal property of \,``dynamical coherence'', then $\hp(\phi_H,\EE)$ belongs to $\{0,1,2\}$ and $\hp(\phi_H,\EE)=2$ if and only if $\phi_H$ possesses a hyperbolic orbit. This result and the definition of dynamical coherence are recalled in Section 2. 
The main result of this paper is the following. We denote by $\DD\CC$ the set of metrics on $\T^2$ with dynamically coherent geodesic flows.
\vspace{0.4cm}

\noindent \textbf{Theorem A.}
\emph{Let} $g_0$ \emph{be a} \emph{flat metric on} $\T^2$. 
\emph{There exists a neighborhood} $\UU$ \emph{of} $g_0$ \emph{in the set of} $C^5$ 
\emph{metrics such that for any} $g\in \UU\cap\DD\CC$:
\begin{itemize}
\item \emph{either} $g$ \emph{is flat},
\item \emph{or} $g$ \emph{possesses a hyperbolic orbit.}
\end{itemize}
\vspace{0.2cm}

\noindent Therefore, if $g\in \UU\cap\DD\CC$ is not flat, then $\hp(\phi_g)=2>\hp(\phi_{g_0})$.

\section{Dynamically coherent systems}

Consider a  $4$-dimensional symplectic manifold $(M,\Om)$ and a smooth Hamiltonian function $H:M\rit \R$, with its associated vector field $X^H$ and its associated Hamiltonian flow $\phi_H$.
We fix a (connected component of ) a compact regular energy level $\EE$ of $H$. It is an orientable compact connected submanifold of dimension $3$.  

\begin{Def}
A first integral $F:M\rit \R$ of the vector field $X^H$ is said to be \emph{nondegenerate in the Bott sense} on $\EE$ if the critical points of $f:=F_{|\EE}$ form nondegenerate strict smooth submanifolds of $\EE$, that is, the Hessian $\partial^2f$ of $f$ is nondegenerate on complementary subspaces to these submanifolds. The triple $(\EE,\phi_H,f)$\index{$(\EE,\phi_H,f)$} is called a \emph{nondegenerate Bott system}.
\end{Def}

By the Arnol'd-Liouville theorem, if the regular levels of the moment map $(H,F)$ are compact,  the regular set of $(H,F)$ admits a covering by domains $\ha{\jA}$ saturated for $(H,F)$ and symplectomorphic to $\T^2\times B$ ($B\subset \R^2$), namely the \emph{action-angle domains}.  On such a domain,  the leaves 
of the foliation induced by $(H,F)$ are homotopic tori that are symplectomorphic to the tori $\T^2\times\{r\},\, r\in B$ and the flow is conjugated to a Hamiltonian flow $\phi$ on $\T^2\times B$  of the form $\phi^t(\th,r)=(\th+\om(r)\, [\Z^2],r)$. 

In the following, we consider  the restrictions of the vector field and  the flow to $\EE$, they are still denoted by $X^H$ and $\phi_H$. 
Let us describe the geometry of the level sets of $f$. We denote by $\Rr(f)$ the set of regular values of $f$ and by ${\rm{Crit}}(f)$ the set of its critical values. 
If $c\in{\rm{Crit}}(f)$, we denote by $\RR_c$ the union  of the connected components of $f\inv(\{c\})$ that does not contain any critical point. We define the \emph{regular set} of $f$ as $\RR:=f\inv(\{\Rr\})\cup\left(\bigcup_{c\in{\rm{Crit}}(f)}\RR_c\right)$. 
A connected component of $\RR$ is contained in the connected component of an intersection $\jA:=\ha{\jA}\cap\EE$, where $\ha\jA$ is an action-angle domain. 

The singularities of $f$ are well known. The following proposition is proved in \cite{Mar-93} and \cite{F-88}.

\begin{prop}\label{CritBott}
The critical submanifolds may only be circles, tori or Klein bottles.
\end{prop}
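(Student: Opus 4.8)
The plan is to exploit that the Hamiltonian flow is nowhere zero on $\EE$ and preserves each critical submanifold, so that the Poincar\'e--Hopf theorem constrains the topology through the vanishing of the Euler characteristic. Two elementary facts get the argument started. Since $\EE$ is a regular level of $H$ we have $dH\neq 0$ along $\EE$, and as $\Om$ is nondegenerate this forces $X^H$ to be nowhere zero on $\EE$. On the other hand, $F$ being a first integral, $f=F_{|\EE}$ is invariant under $\phi_H$; differentiating the identity $f\circ\phi_H^t=f$ shows that the critical locus $\{df=0\}$ is preserved by the flow. Hence each connected critical submanifold $N$ is $\phi_H$-invariant, and $X^H$ restricts to a nowhere-vanishing vector field tangent to $N$.

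Since $N$ is a closed manifold carrying a nonvanishing vector field, the Poincar\'e--Hopf theorem yields $\chi(N)=0$. This rules out $\dim N=0$: a zero-dimensional $N$ would force $X^H$ to vanish on $N$, contradicting the above (equivalently, a point has $\chi=1$). It also rules out $\dim N=3$, since a compact $3$-dimensional submanifold of the connected $3$-manifold $\EE$ would be all of $\EE$, making $f$ constant and contradicting the existence of regular values supplied by the Arnol'd--Liouville picture. Therefore $\dim N\in\{1,2\}$.

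It then remains to invoke the classification of closed manifolds in these dimensions. A closed connected $1$-manifold is a circle. A closed surface has $\chi=0$ exactly when it is a torus (orientable) or a Klein bottle (non-orientable), and both genuinely admit nonvanishing vector fields, while the sphere and every other surface are excluded by $\chi\neq 0$. This produces precisely the three types in the statement. The only step demanding care is the flow-invariance of the critical locus together with the tangency of $X^H$ to each of its components: this is exactly where Bott nondegeneracy enters, guaranteeing that the critical set is a disjoint union of smooth closed submanifolds to which the Euler-characteristic argument applies.
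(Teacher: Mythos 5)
Your proof is correct. The paper itself gives no argument for this proposition, deferring instead to the cited references of Fomenko and Marco (\cite{F-88}, \cite{Mar-93}), and your argument is exactly the classical one found there: the Hamiltonian vector field $X^H$ is nonvanishing on the regular level $\EE$ and tangent to each (compact, boundaryless, flow-invariant) critical submanifold, so Poincar\'e--Hopf forces $\chi=0$, which together with the low-dimensional classification (and the exclusion of dimensions $0$ and $3$, the latter also being ruled out by the word ``strict'' in the definition of a Bott integral) leaves only circles, tori and Klein bottles.
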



The  critical circles for $f$ are periodic orbits of the flow $\phi^H$. Their \textit{index} is the number of negative eigenvalues of the restriction of $\partial^2f$ to a supplementary plane to $\R X^H$.
Let us summarize briefly the two possibilities that occur (see \cite{Mar-93} for more details).
Fix a critical circle $\CC$ for $f$ wih $f(\CC)=c$. 

$\bullet$ If $\CC$ has index $0$ or $2$, there exists a neighborhood $U$ of $\CC$, saturated for $f$,  that is diffeomorphic to the full torus $D\times \T$ (where $D$ is an open disk in $\R^2$ containing $0$) such that $f\inv\{c\}\cap U=\CC$. Denoting by $\Psi$ the diffeomorphism, we assume that  $\CC=\Psi(\{0\}\times\T)$. The levels $f\inv(\{c'\})$ for $c'$ close to $c$ are tori $\Psi(C\times\T)$, where $C$ is a circle in $D$ homotopic to $0$. 
Denoting by $D^*$ the pointed disk, one shows (see \cite{BBM-10} and references therein) that $U\setm\CC=\Psi(D^*\times\T)$ is contained in a action-angle domain $\jA$.

$\bullet$ If $\CC$ has index $1$, there exists a neighborhood $U$ of $\CC$ such that $f\inv\{c\}\cap U$ is a stratified submanifold homeomorphic to  a ``fiber bundle'' with basis a circle and with fiber a ``cross''. 
The  connected component $\PP$ of $f\inv(\{c\})$ containing $\CC$ is a finite union of critical circles and cylinders $\T\times \R$ whose boundary is either made of one or two critical circles. All the critical circles contained in $\Pp$\index{$\Pp$} are homotopic and have index $1$. Such a stratified submanifold is called a \emph{polycycle}.
In \cite{F-88}, Fomenko assumes that a polycycle contains only one critical circle. In this case, we say that $\PP$ is a ``eight-level'', and we write $\8$-level. 

Finally in \cite{LM}, we have shown that if $\Tt$ is a critical torus for $f$, then $\Tt$ is contained in a action-angle domain $\ha\jA$. We denote by $\TT_c$ the set of all critical tori of $f$ and we  introduce the  domain $\ha{\RR}:=\RR\cup\TT_c$.
A connected component of $\ha{\RR}$ is  the connected component of an intersection $\jA:=\ha{\jA}\cap\EE$, where $\ha{\jA}$ is an action-angle domain.
 
Such a domain $\jA$ is diffeomorphic to $\T^2\times I$, where $I$ is an interval of $\R$ and it satisfies the following properties:
\begin{itemize}
\item there exist $a,b\in{\rm{Crit}}(f)$ with $a<b$ and $\jA= f\inv(]a,b[)$,
\item for all $x\in\,]a,b[$, $f\inv(x)\cap \Aa$ is diffeomorphic to $\T^2$,
\item there is a critical point of $f$ in each connected component of $\partial \jA$.
\end{itemize}
We say that $\jA$ is a \emph{maximal action-angle domain of} $(\EE,\phi_H,f)$.
This discussion can be summarized in the following lemma.

\begin{cor}
The energy level $\EE$ is a  finite disjoint  union  of  maximal action-angle domains  $\jA$, critical circles with index $0$ or $2$, $\8$-levels and Klein bottles. 
\end{cor}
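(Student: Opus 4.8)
The plan is to assemble the local pictures established above into a global stratification of $\EE$, isolating the regular locus of $f$ and the neighborhood of each type of critical submanifold produced by Proposition~\ref{CritBott}. The first point to record is finiteness: since $\EE$ is compact and $f$ is nondegenerate in the Bott sense, every critical submanifold has a saturated neighborhood meeting no other one, so compactness forces $\Crit(f)$ to be a finite union of circles, tori and Klein bottles, and the set of critical values to be finite.

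Next I would treat the regular stratum $\ha\RR=\RR\cup\TT_c$. Every point of $\ha\RR$ lies on a regular level of $f$, on a component of a critical level carrying no critical point, or on a critical torus, and in each case the Arnol'd-Liouville theorem supplies an action-angle chart around the ambient Liouville torus. Hence $\ha\RR$ is open and saturated and fibred by Liouville tori, and each connected component is a component of some $\jA=\ha{\jA}\cap\EE$ enjoying the three defining properties of a maximal action-angle domain: it equals $f\inv(]a,b[)$ for two critical values $a<b$, every level over $]a,b[$ is a torus, and each boundary component carries a critical point. The point of including $\TT_c$ is that a critical torus is still a topological Liouville torus, hence is absorbed by the fibration instead of producing a singular stratum.

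Then I would analyze the complement $\EE\setm\ha\RR$, which by construction is exactly the union of the components of critical levels that contain a critical point. Going through the remaining cases of Proposition~\ref{CritBott} together with the normal forms recalled above: a component carrying an index $0$ or $2$ circle reduces to that circle, since in the model $U=\Psi(D\times\T)$ one has $f\inv(c)\cap U=\CC$; a component carrying an index $1$ circle is a polycycle, which under the standing Fomenko hypothesis of a single critical circle is an $\8$-level; and a component carrying a Klein bottle reduces to that bottle, for a Klein bottle cannot be a fibre of the Liouville foliation and its transverse Hessian, being definite, makes the critical value a local extremum, so the level meets a tubular neighborhood only along the bottle.

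Finally I would check disjointness and finiteness and conclude. The four families of pieces sit over distinct critical values or inside disjoint saturated neighborhoods, and by the first step there are finitely many critical levels, hence finitely many singular strata bounding finitely many action-angle domains, whose union together with the singular strata exhausts $\EE$. The step I expect to be most delicate is this last bookkeeping: verifying that no component of a critical level escapes the four listed types, and in particular that each index $1$ component is genuinely an $\8$-level rather than a more complicated polycycle. This is precisely where the Bott nondegeneracy and the structural classification recalled in the preceding discussion do the decisive work.
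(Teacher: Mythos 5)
Your proposal is correct and follows essentially the same route as the paper: the same splitting of $\EE$ into the enlarged regular set $\ha{\RR}=\RR\cup\TT_c$ (whose connected components are the maximal action-angle domains) and the critical-level components classified by Proposition~\ref{CritBott}, with Fomenko's one-critical-circle hypothesis converting polycycles into $\8$-levels. The one imprecision worth noting is that absorbing critical tori into action-angle domains is not a direct application of the Arnol'd-Liouville theorem (the moment map $(H,F)$ is singular along such a torus); it is the separate result of \cite{LM} quoted just before the statement, which your ``topological Liouville torus'' gloss implicitly invokes.
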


Now, consider the Hamiltonian flow $\psi$ on an  energy level $\EE$ of a Hamiltonian function on a  $4$-dimensional symplectic manifold and let $\ga$ be  a periodic orbit of $\psi$ with period $T$. The eigenvalues of   $D\psi_T(q)$ do not depend on $q$ in $\ga$ and $X$ is an eigenvector for $D\psi_T$ associated with the eigenvalue $1$. We denote by $\lam_1, \lam_2$ the other two eigenvalues. Due to the conservation of the volume, one has $\lam_1\lam_2=1$. The closed orbit $\ga$ is said to be \emph{nondegenerate} if  $\lam_1$ and  $\lam_2$ are not equal to $1$. 
There are two types of nondegenerate closed orbit:
\begin{itemize}
\item \emph{elliptic} if $\lam_1$ and $\lam_2$ are complex conjugate numbers lying on the unit circle $\U$.
\item \emph{hyperbolic} if $(\lam_1,\lam_2)\in \R^2$ with $|\lam_i|\neq 1$.
\end{itemize}

Let us come back now to the Bott system $(\EE, X^H, f)$. We first observe that a periodic orbit that is nondegenerate must be a critical circle for $f$. 
We also see that an elliptic orbit is a critical circle with index $0$ or $2$ and that a hyperbolic orbit is a critical circle with index $1$.
Indeed, if $\ga$ is a hyperbolic periodic orbit, it possesses invariant manifolds $W^s$ and $W^u$ that meet transversaly along $\ga$: this is possible if and only if $\ga$ is contained in a $\8$-level. 

Conversely, a critical circle is not always a nondegenerate periodic orbit. This led us to the introduce the following definition.

\begin{Def}
The system $(\EE,\phi_H,f)$ is said to be \emph{dynamically coherent}  if the critical circles $\CC$ are nondegenerate periodic orbits.
\end{Def}

\begin{exm}
In \cite{L-1}, we show that the geodesic flows of generic tori of revolution are dynamically coherent systems with hyperbolic periodic orbits.
\end{exm}

We conclude this section with the following result proved in \cite{LM}:

\begin{thm}
Let $(\EE,\phi_H,f)$ be a dynamically coherent system. Then 
\[
\hp(\phi_H)\in\{0,1,2\}.
\]
Moreover, $\hp(\phi_H)=2$ if and only if $\phi_H$ possesses a hyperbolic orbit.
\end{thm}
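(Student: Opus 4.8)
The plan is to prove the theorem in two independent implications, matching the structure of the polynomial entropy formula established for action-angle systems in \cite{LM}. The underlying dichotomy is: on a dynamically coherent Bott system the energy level $\EE$ decomposes, by the Corollary above, into maximal action-angle domains, elliptic critical circles, $\8$-levels and Klein bottles. The polynomial entropy of the whole flow is the supremum of the contributions of these pieces, so I would first show that each $\8$-level (which, by dynamical coherence, contains a hyperbolic orbit) forces $\hp(\phi_H)=2$, and then show that in the absence of $\8$-levels the polynomial entropy is at most $1$.

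\textbf{The hyperbolic direction.} Suppose $\phi_H$ possesses a hyperbolic periodic orbit $\ga$. By the discussion preceding the Definition of dynamical coherence, $\ga$ is a critical circle of index $1$ lying in a $\8$-level $\PP$, and its invariant manifolds $W^s$, $W^u$ meet transversally. The homoclinic behaviour on $\PP$ produces orbits whose separation under the dynamical metric $d^\phi_t$ grows polynomially in $t$. Concretely I would localize near $\ga$ using the normal form for a hyperbolic periodic orbit, estimate the number $G_t(\eps)$ of $(t,\eps)$-balls needed to cover a transversal neighbourhood of the homoclinic loop, and show this count grows like $t^2$ (two transverse directions, each contributing a factor $t$ through the logarithmic shear along the stable/unstable manifolds). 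This yields $\hp(\phi_H)\geq 2$. The matching upper bound $\hp(\phi_H)\leq 2$ holds on every compact energy level for these systems and I would import it from \cite{LM}; together they give $\hp(\phi_H)=2$.

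\textbf{The non-hyperbolic direction.} Conversely, suppose $\phi_H$ has no hyperbolic orbit. Then by dynamical coherence there are no index-$1$ critical circles, hence no $\8$-levels, and the decomposition of $\EE$ consists only of maximal action-angle domains, elliptic critical circles and Klein bottles. On each maximal action-angle domain $\jA\cong\T^2\times I$ the flow is conjugate to the linear form $\phi^t(\th,r)=(\th+\om(r)[\Z^2],r)$, and the formula from \cite{LM} for action-angle Hamiltonians gives a polynomial entropy equal to the maximal rank of the second derivative of the restricted frequency map, which on a two-torus foliation is at most $1$. The elliptic circles and Klein bottles, being lower-dimensional strata carrying quasi-periodic or periodic dynamics, contribute entropy $0$. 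Taking the supremum over the finitely many pieces yields $\hp(\phi_H)\leq 1<2$. Combined with the clause $\hp\in\{0,1,2\}$, this establishes that $\hp(\phi_H)=2$ precisely when a hyperbolic orbit is present.

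\textbf{Main obstacle.} The delicate point is the lower bound $\hp\geq 2$ near the $\8$-level: one must control the orbits in a full neighbourhood of the polycycle, not just in a linearizing chart of a single hyperbolic orbit, because a polycycle may be a union of several critical circles joined by heteroclinic cylinders. The covering estimate must account for orbits that shadow the whole homoclinic/heteroclinic cycle, and showing the count is genuinely quadratic (rather than merely superlinear) requires a careful passage-time analysis along each hyperbolic segment, combining the logarithmic time spent near each critical circle with the transition maps between them. I expect this localized counting argument, together with verifying that the contributions from distinct strata do not interfere, to be the technical heart of the proof.
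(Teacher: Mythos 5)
This paper does not actually contain a proof of the statement: it is quoted as a black box from \cite{LM} (it is the main theorem of that earlier paper), and here it is only invoked to convert ``$\phi_g$ has a hyperbolic orbit'' into ``$\hp(\phi_g)=2$'' in the proof of Theorem A. So your proposal can only be compared with the strategy of \cite{LM}, and in broad outline you do reproduce it: decompose $\EE$ into maximal action-angle domains, elliptic circles, $\8$-levels and Klein bottles, show the entropy is at most $1$ away from polycycles, and show it equals $2$ near a polycycle. However, as a proof your text has two genuine gaps. First, a circularity: you ``import from \cite{LM}'' the upper bound $\hp(\phi_H)\leq 2$. That bound is the clause $\hp(\phi_H)\in\{0,1,2\}$ of the very theorem you are proving, and there is no earlier independent statement to cite for it. The upper bound near an $\8$-level is not soft: dynamical coherence makes the critical circles hyperbolic, but one still has to cover a full neighborhood of the polycycle by dynamical balls and show the count is $O(t^2)$, including the orbits that linger near the separatrices; this is comparable in difficulty to your lower bound and cannot be assumed.

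Second, and more structurally, your reduction ``$\hp$ of the flow is the supremum of the contributions of the pieces'' is only valid for a \emph{finite cover by compact invariant sets}. The maximal action-angle domains $\jA$ are open; their closures contain the singular leaves, and the polynomial entropy of the closure is not computable from the action-angle formula of \cite{LM}, because the action-angle coordinates and the frequency map degenerate at $\partial\jA$. This is exactly the mechanism that makes $\hp$ jump to $2$: on every compact invariant subset of a domain $\jA$ adjacent to a polycycle the entropy is at most $1$, while on the closure it is $2$, the shear blowing up logarithmically at the separatrix. Consequently, in your non-hyperbolic direction, ``each $\jA$ contributes at most $1$ and the lower-dimensional strata contribute $0$'' does not yield $\hp(\phi_H)\leq 1$: one must prove uniform covering estimates on whole neighborhoods of the elliptic circles and of the Klein bottles (via normal forms or the local structure of the foliation near these leaves), not on the strata themselves; this boundary analysis is the actual content of \cite{LM} in that case. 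The same issue affects the integrality claim: your argument, even if completed, gives $\hp\in[0,1]$ in the absence of hyperbolic orbits, whereas the theorem asserts $\hp\in\{0,1\}$, and excluding intermediate values again requires the local analysis near the singular leaves. Your paragraph on the lower bound near a polycycle is in the right spirit (passage-time, i.e.\ logarithmic, separation near the hyperbolic circles combined with the transverse shear), and you correctly flag it as the technical heart; but as it stands the proposal proves neither the upper bound $2$ nor the bound $1$ in the non-hyperbolic case.
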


\section{Proof of theorem A}

Let $(M,g)$ be a compact Riemannian manifold. The \emph{ geodesic action}  is  defined on the set of absolutely continuous curves $c:[a,b]\ma M$ by
\[
\Aa_g(c)=\demi\int_a^b||\dot{c}(t)||^2dt.
\]
A \emph{variation} of a curve $\ga:[a,b]\rit M$ is a differentiable map 
\[
\Ga:[a,b]\times]-\eps,\eps[\,\rit M,\quad \eps>0 
\]
such that $\Ga(t,0)=\ga(t)$ for all $t\in[a,b]$. The variation is  \emph{proper} if the endpoints are fixed, that is, $\Ga(a,s)=\ga(a)$ and $\Ga(b,s)=\ga(b)$ for all $s\in]-\eps,\eps[$. 
The vector field $V(t)=\Dp{\Ga}{s}(t,0)$ along $\ga$ is the \emph{variation field} of $\Ga$. It is called proper if $V(a)=V(b)=0$. 

The geodesic segments, that is, the projections $\ga:[a,b]\rit M$ of curves $[a,b]\rit T^*M : t\ma \phi_g^t(x)$, are the critical points of $\Aa_g$ in the following sense:   for any proper variation $\Ga$ of $\ga$, denoting by $\ga_s$ the curves $\ga_s:=\Ga(.\,,s)$, one has:
\[
\frac{d}{ds}\Aa_g(\ga_s)=0.
\]

A variation $\Ga:[a,b]\times]-\eps,\eps[\,\rit M$ of $\ga$ is  a \emph{geodesic variation} if the curves $\ga_s$ are geodesic segments for all $s$. 
One says that $\ga(b)$ is \textit{conjugate} to $\ga(a)$ (along $\ga$) if $\ga$ admits a geodesic variation whith  proper  variation field. 
The following proposition is a classical consequence of the possiblity of ``rounding the corners''.

\begin{prop}
Let $\ga:[a,b]\rit M$ be a segment of geodesic.
If there exists $\tau\in\,]a,b[$ such that $\ga(\tau)$ is conjugate to $\ga(a)$ along $\ga$, then, $\ga$ cannot be minimizing between $\ga(a)$ and $\ga(b)$.
\end{prop}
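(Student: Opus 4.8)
The plan is to show that $\ga$ fails the second-order minimality test by exhibiting a proper variation along which the action $\Aa_g$ strictly decreases; this is the analytic substance of ``rounding the corners''. First I would reformulate the hypothesis: by the definition of conjugacy, a geodesic variation of $\ga|_{[a,\tau]}$ with fixed base point $\ga(a)$ and proper variation field is exactly a nontrivial Jacobi field $J$ along $\ga|_{[a,\tau]}$ with $J(a)=J(\tau)=0$. Since a Jacobi field solves a second-order linear ODE, vanishing together with its derivative at $\tau$ would force $J\equiv 0$; hence $\na_{\dot\ga}J(\tau)\neq 0$, a fact that will be decisive. I would then recall the second variation formula: for a proper piecewise smooth field $V$ along $\ga$, writing $V'=\na_{\dot\ga}V$ and letting $R$ be the curvature tensor, any variation realizing $V$ satisfies $\frac{d}{ds}\Aa_g(\ga_s)|_0=0$ (as $\ga$ is a geodesic) and $\frac{d^2}{ds^2}\Aa_g(\ga_s)|_0=I(V,V)$, where
\[
I(V,V)=\int_a^b\big(|V'|^2-\langle R(V,\dot\ga)\dot\ga,V\rangle\big)\,dt.
\]
It therefore suffices to produce a proper $V$ with $I(V,V)<0$.

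The heart of the argument is a corner construction. I would define the broken field $W$ by $W=J$ on $[a,\tau]$ and $W=0$ on $[\tau,b]$; it is continuous, proper, and piecewise smooth with a single corner at $\tau$. Integrating by parts on each subinterval and using the Jacobi equation $J''+R(J,\dot\ga)\dot\ga=0$, the interior terms cancel and the boundary terms vanish because $W(a)=W(\tau)=W(b)=0$, so $I(W,W)=0$. Next I would choose any proper smooth field $Z$ with $Z(\tau)=-\na_{\dot\ga}J(\tau)$. The same integration by parts now retains the corner contribution $\langle W'(\tau^-)-W'(\tau^+),Z(\tau)\rangle=\langle\na_{\dot\ga}J(\tau),Z(\tau)\rangle$, which yields $I(W,Z)=-|\na_{\dot\ga}J(\tau)|^2<0$. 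Consequently, for the proper field $W_\eps:=W+\eps Z$ one gets
\[
I(W_\eps,W_\eps)=-2\eps\,|\na_{\dot\ga}J(\tau)|^2+\eps^2\,I(Z,Z),
\]
which is strictly negative for $\eps>0$ small enough.

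To conclude, I would realize $W_\eps$ by an honest proper variation of $\ga$ and smooth out its corner at $\tau$: this ``rounding'' produces curves $\ga_s$ sharing the endpoints $\ga(a),\ga(b)$ with $\frac{d}{ds}\Aa_g(\ga_s)|_0=0$ and $\frac{d^2}{ds^2}\Aa_g(\ga_s)|_0=I(W_\eps,W_\eps)<0$. Hence $\Aa_g(\ga_s)<\Aa_g(\ga)$ for small $s\neq 0$, and $\ga$ cannot minimize the action between its endpoints.

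I expect the main obstacle to be handling the nonsmoothness rigorously, namely justifying that the index form is the correct second variation for piecewise smooth proper fields and that a broken competitor can genuinely be shortened upon rounding. The clean route is to stay within the index form on piecewise smooth proper fields, where the corner at $\tau$ furnishes the decisive negative term; the nontriviality $\na_{\dot\ga}J(\tau)\neq 0$ is precisely what prevents that corner term from degenerating and makes the strict inequality hold.
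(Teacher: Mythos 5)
Your proof is correct and is precisely the classical ``rounding the corners'' argument that the paper invokes without writing out: the paper states the proposition as a classical consequence of that technique, and your index-form computation (extending the Jacobi field by zero, then perturbing by a field $Z$ with $Z(\tau)=-\na_{\dot\ga}J(\tau)$ to extract the strictly negative corner term) is the standard realization of it. The only point worth noting is that since the paper defines $\Aa_g$ on absolutely continuous curves, the piecewise-smooth competitors are already admissible, so the final smoothing step is not even strictly necessary.
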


The following result was proved in the framework of the  theory of Hamilton-Jacobi equations (see \cite{F}). 
We denote by $X^g$ the geodesic vector field associated with the geodesic flow $\phi_g$ on $T^*M$.

\begin{thm}\label{graphmin}
Let $\ga$ be the projection of a solution of $X^g$ which is contained in a $C^1$ Lagrangian graph over $M$. Then, for any $a<b$ in $\R$, the curve $\ga_{|[a,b]}:[a,b]\rit M$ is a minimizer of $\Aa_g$.
In particular, $\ga$ does not have conjugate points.
\end{thm}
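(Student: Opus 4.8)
The plan is to establish that a geodesic $\ga$ lying in a $C^1$ Lagrangian graph over $M$ is action-minimizing, by exhibiting a calibration: a closed $1$-form whose associated Hamilton-Jacobi solution makes $\ga$ a minimizer. The key object is the Lagrangian graph itself. Since the graph $\Lam\subset T^*M$ is Lagrangian, it is locally the graph of $d\S$ for some generating function $\S$; in fact, because $\Lam$ is a graph over all of $M$, the restriction of the tautological (Liouville) $1$-form $p\,d\th$ to $\Lam$ is closed, and pulling back along the section gives a closed $1$-form $\eta$ on $M$ whose graph is precisely $\Lam$. The geodesic $\ga$ has the property that its velocity covector at each point equals $\eta(\ga(t))$, i.e.\ $\ga$ is an orbit of the Hamiltonian vector field $X^g$ tangent to $\Lam$.

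First I would set up the calibration inequality. Because $\Lam$ is an invariant Lagrangian graph for the geodesic Hamiltonian $H$, the function $H$ is constant on $\Lam$ (the energy is preserved and, since unit speed, $H\equiv\demi$ on the unit level); equivalently, $\eta$ solves the stationary Hamilton-Jacobi equation $H(\th,\eta(\th))=\demi$ pointwise. For the Lagrangian $L(\th,v)=\demi\|v\|^2$ dual to $H$, this is exactly the statement that $\eta$ is a \emph{subsolution}: for every $(\th,v)$ one has $\eta(\th)\cdot v \le L(\th,v)+\demi$, with equality precisely when $v$ is the geodesic direction carried by $\Lam$, namely $v=\partial H/\partial p$ evaluated at $\eta(\th)$. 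This is the convex-duality (Fenchel) inequality $p\cdot v\le L(\th,v)+H(\th,p)$, specialized to $p=\eta(\th)$ and using $H=\demi$ on $\Lam$.

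Next I would run the standard calibration argument. Let $\si:[a,b]\to M$ be any absolutely continuous curve with the same endpoints as $\ga_{|[a,b]}$. Integrating the Fenchel inequality along $\si$ gives
\[
\int_a^b \eta(\si(t))\cdot\dot\si(t)\,dt \;\le\; \Aa_g(\si)+\tfrac{b-a}{2}.
\]
Since $\eta$ is closed, the left-hand integral $\int_\si \eta$ depends only on the homotopy class and endpoints of $\si$; choosing $\si$ homotopic to $\ga$ (which suffices, or working in a local chart / the universal cover where $\eta=d\S$) it equals $\int_\ga \eta$. Along $\ga$ the inequality is an equality because $\dot\ga(t)$ is exactly the geodesic direction selected by the graph, so $\int_\ga\eta = \Aa_g(\ga)+\tfrac{b-a}{2}$. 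Combining these yields $\Aa_g(\ga_{|[a,b]})\le \Aa_g(\si)$, which is the desired minimality. The final assertion about conjugate points then follows immediately from the preceding Proposition: if $\ga$ had a conjugate point $\ga(\tau)$ with $\tau\in\,]a,b[$, that proposition would force $\ga$ to fail to minimize between $\ga(a)$ and $\ga(b)$, contradicting what we just proved.

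\textbf{The main obstacle} I anticipate is the low regularity: the graph is only $C^1$, so $\eta$ is merely a continuous (not $C^1$) closed $1$-form, and $\S$ is $C^2$ at best, which means one cannot naively invoke smooth Hamilton-Jacobi theory. The care needed is in two places: justifying that $\eta$ is genuinely closed in the distributional sense (so that the line integral is path-independent in the appropriate class), and justifying the equality case along $\ga$, where one must verify that $\dot\ga(t)=\partial H/\partial p(\th,\eta(\th))$ holds for the actual geodesic carried by the invariant graph despite the limited smoothness. This is precisely the setting handled by weak KAM / viscosity-solution theory, and it is why the statement is attributed to the Hamilton-Jacobi framework of \cite{F}; I would lean on that theory to make the calibration rigorous at $C^1$ regularity rather than attempting a classical smooth argument.
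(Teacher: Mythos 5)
Your overall strategy is the standard calibration argument from weak KAM theory, which is exactly what the paper's citation of \cite{F} points to (the paper gives no proof of its own), and most of it is sound: identifying the Lagrangian graph $\Lam$ with the graph of a closed $1$-form $\eta$, integrating the Fenchel inequality along a competitor, using the equality case along $\ga$, and correctly noting that closedness of $\eta$ only yields minimality among curves homotopic to $\ga$ rel endpoints --- which is indeed all that is needed, since the rounding-of-corners competitor in the conjugate-point proposition is $C^0$-close to $\ga$, hence homotopic to it.

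The genuine gap is the step where you assert that $H$ is constant on $\Lam$. You justify it by declaring $\Lam$ to be an \emph{invariant} Lagrangian graph, but invariance is not among the hypotheses: the statement only assumes that the single orbit lies in $\Lam$, and conservation of energy then gives $H(\th,\eta(\th))=c$ \emph{along $\ga$} only; your parenthetical (``unit speed, so $H\equiv\demi$ on the unit level'') does not help, because $\Lam$ is not assumed to lie in an energy level. This constancy is precisely what your calibration consumes: for a competitor $\sigma$ the Fenchel inequality produces $\int_a^b H(\sigma(t),\eta(\sigma(t)))\,dt$, and this is bounded by $c(b-a)$ only if $H\circ\eta\leq c$ on all of $M$ with the \emph{same} constant $c$ as the orbit's energy. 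The hypothesis cannot be dispensed with: on a torus of revolution the outer equator is a geodesic whose full orbit lies in the smooth Lagrangian graph of the constant form $p_1^0\,d\th_1$, yet the curvature along it is positive, so it has conjugate points and long segments of it do not minimize. Hence the theorem is true only with the added hypothesis that $\Lam$ is invariant under $\phi_g$ --- equivalently, that $H$ is constant on $\Lam$; for a connected $C^1$ Lagrangian graph the two are equivalent, since $dH$ annihilates $T\Lam$ exactly when $X^g$ is tangent to $\Lam$, the symplectic orthogonal of $T\Lam$ being $T\Lam$ itself. This is how the result is used in the paper (the graphs there are invariant tori of the foliation induced by the first integral), so your reading is the intended one, but a complete proof must state the invariance hypothesis and derive $H\circ\eta\equiv c$ from it, rather than assume both tacitly. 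Once that is repaired, the regularity concerns in your last paragraph are minor: a continuous, distributionally closed $1$-form already has homotopy-invariant line integrals, and the Fenchel equality along $\ga$ uses only that $\ga$ is a genuine orbit contained in the graph, so no viscosity-solution machinery is actually required.
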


The following theorem was conjectured by Hopf who proved it in dimension 2. The proof for arbitrary dimensions is due to Burago and Ivanov (\cite{BI-94}).

\begin{thm}\textbf{\emph{(Hopf, Burago-Ivanov)}}\label{BI}. Assume that $g$ is a Riemannian metric on the torus $\T^n$ which does not have conjugate points. Then $g$ is  flat.
\end{thm}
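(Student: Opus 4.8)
The plan is to separate the classical two-dimensional mechanism from the genuinely higher-dimensional argument, since the obstruction that appears for $n\geq 3$ is precisely what makes the theorem deep. In dimension $2$ I would argue as Hopf did. Lift $g$ to the $\Z^2$-periodic metric $\tig$ on $\R^2$; the absence of conjugate points guarantees that along each geodesic the \emph{stable} solution $u$ of the Riccati equation $u'+u^2+K=0$ (the geodesic curvature of the horocycle through the point) exists and is bounded, defining a bounded measurable function $u$ on the unit tangent bundle $S\T^2$. Averaging against the (flow-invariant) Liouville measure $\mu$ kills the derivative term, giving
\[
\int_{S\T^2}\big(u^2+K\big)\,d\mu=0.
\]
Since $\int_{S\T^2}K\,d\mu=2\pi\int_{\T^2}K\,dA=0$ by Gauss--Bonnet on the torus, we get $\int u^2\,d\mu=0$, hence $u\equiv 0$ and $K\equiv 0$: the metric is flat.

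For $n\geq 3$ this collapses: the Riccati equation becomes $U'+U^2+R=0$ for the shape operator $U$ of the horospheres, and averaging yields $\int_{S\T^n}(\operatorname{tr}(U^2)+\operatorname{Ric})\,d\mu=0$, but there is no topological identity forcing $\int\operatorname{Ric}$ to vanish, so one cannot conclude. The plan here is to follow Burago--Ivanov and replace the curvature bookkeeping by a \emph{volume} comparison. First I would fix the $\Z^n$-periodic lift $\tig$ on $\R^n$ and prove the preliminary (and not automatic) fact that, in this cocompact abelian setting, no conjugate points forces every geodesic of $\tig$ to be globally \emph{minimizing}. This makes the Busemann functions $b_\xi(x)=\lim_{t\to\infty}\big(d_{\tig}(x,\ga_\xi(t))-t\big)$ well defined and $C^1$, with $\|\na b_\xi\|\equiv 1$, so that each $\om_\xi:=-db_\xi$ is a closed $1$-form of constant unit dual norm.

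Next I would introduce the stable norm $\|\cdot\|_s$ on $\R^n\cong H_1(\T^n;\R)$ and select directions $\xi_1,\dots,\xi_n$ dual to a basis of $H^1(\T^n;\Z)$; assembling the corresponding Busemann functions gives a $\Z^n$-equivariant map $\Phi=(b_{\xi_1},\dots,b_{\xi_n}):\R^n\rit\R^n$ intertwining the deck action with Euclidean translations, hence a degree-one map $\bar\Phi:(\T^n,g)\rit(\T^n,\|\cdot\|_s)$ onto the flat torus of the stable norm. The analytic heart is then a two-sided volume estimate: a Besicovitch-type inequality bounds $\Vol(\T^n,g)$ below by the covolume of $\|\cdot\|_s$, while the fact that the $\om_{\xi_i}$ are closed, calibrated and of unit dual norm forces the Jacobian of $\Phi$ to have a fixed sign and to integrate over a fundamental domain to exactly that covolume. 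Equality throughout then pins down $\det d\Phi$ and forces the second fundamental forms of all horospheres to vanish, so that (via the Riccati equation again, now with the curvature term controlled) the metric is flat and $\|\cdot\|_s$ is Euclidean.

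I expect the two hardest points to be, first, the preliminary minimizing property of geodesics on the universal cover --- the hypothesis only gives that $\exp_p$ is a diffeomorphism, and turning this into global minimization (with the ensuing $C^1$ regularity and unit gradient of Busemann functions) genuinely uses periodicity --- and second, the equality case of the volume comparison, where one must extract pointwise rigidity (vanishing of all horospherical shape operators, Euclidean stable norm) from the mere coincidence of two volumes. Everything else --- the Riccati averaging, the equivariance of $\Phi$, and the reduction from $\tig$ back to $g$ --- is routine once these two are in place.
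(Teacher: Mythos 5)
The paper itself offers no proof of Theorem \ref{BI}: it is imported as a known deep result, with the case $n=2$ attributed to Hopf and the general case to Burago and Ivanov \cite{BI-94}, so your proposal can only be measured against those arguments. Your two-dimensional half is a faithful and essentially complete outline of Hopf's proof: existence and boundedness of the stable Riccati solution $u$, the vanishing of $\int u'\,d\mu$ by invariance of the Liouville measure, and the Gauss--Bonnet identity $\int_{\T^2}K\,dA=0$ together force $u\equiv 0$ and then $K\equiv 0$. Your diagnosis of why this collapses for $n\geq 3$ --- there is no topological identity forcing the averaged Ricci term to vanish --- is also exactly the right one.

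The higher-dimensional part, however, is a table of contents for Burago--Ivanov rather than a proof, and the entries where the theorem actually lives are left blank. The Besicovitch-type lower volume bound, the claim that the Jacobian of the Busemann map $\Phi$ has fixed sign and integrates to the covolume of the stable norm, and above all the equality-case rigidity (vanishing of all horospherical shape operators) are asserted, not argued; supplying these is precisely the content of \cite{BI-94}, so as a self-contained proof there is a genuine gap at the analytic heart, even though you are honest about where it sits. Two of your supporting claims are also inaccurate. First, global minimization of geodesics in the universal cover does not ``genuinely use periodicity'': completeness and absence of conjugate points make the exponential map at every point a covering map, hence a diffeomorphism of the simply connected cover, so any two points are joined by a unique geodesic, which must therefore be the minimizing one --- this is the Cartan--Hadamard argument and needs no compact quotient. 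Second, the asserted $C^1$ regularity of Busemann functions with unit gradient everywhere is not automatic for metrics without conjugate points: a priori they are only Lipschitz, and this is one reason Burago and Ivanov work with finite-time approximations and quantitative estimates rather than with a smooth calibration argument.
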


A simple but remarkable consequence of this theorem is the following. Let $g$ be a Riemannian metric on $\T^n$ and $\EE_g$ be the unit cotangent bundle of $\T^n$.

 \begin{cor}\label{eq}
 The metric $g$ is flat if and only if $\EE_g$ is foliated by $\phi_g$-invariant tori that are $C^1$ graphs over $\T^n$. 
\end{cor}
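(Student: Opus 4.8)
The plan is to prove the two implications separately, reserving Theorem~\ref{graphmin} and Theorem~\ref{BI} for the substantive direction.

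For the implication ``flat $\Rightarrow$ foliation'' I would argue directly and explicitly. If $g$ is flat, pick flat coordinates in which $g$ is the quotient Euclidean metric; then $H(\th,p)=g^*(p,p)$ has constant coefficients and the geodesic flow on $T^*\T^n$ is the straight-line flow $\phi_g^t(\th,p)=(\th+2tp,p)$. Consequently $\EE_g=\{g^*(p,p)=1\}$ is foliated by the tori $\Lam_{p_0}=\T^n\times\{p_0\}$ with $g^*(p_0,p_0)=1$, and each $\Lam_{p_0}$ is the graph of the constant, hence closed, $1$-form $\langle p_0,d\th\rangle$. These graphs are $C^\infty$, Lagrangian and $\phi_g$-invariant, and they foliate $\EE_g$, which is exactly what is required.

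For the converse I would proceed in three steps. Assume $\EE_g$ is foliated by $\phi_g$-invariant tori $\Lam_c=\Graph(\eta_c)$ with $\eta_c$ a $C^1$ $1$-form on $\T^n$. The first and crucial step is to show that every leaf is Lagrangian, i.e. $d\eta_c=0$. Invariance of $\Lam_c$ means the geodesic field $X^g$ is tangent to it; writing out this tangency and differentiating the identity $H(\th,\eta_c(\th))\equiv1$ (which holds since $\Lam_c\subset\EE_g$) produces two expressions for $d_\th(H\circ\eta_c)$ whose comparison yields $i_X\,d\eta_c=0$, where $X$ is the nowhere-vanishing projection to $\T^n$ of $X^g$. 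Cartan's formula then gives $L_X\,d\eta_c=d\,i_X\,d\eta_c+i_X\,d(d\eta_c)=0$, so $d\eta_c$ is an exact, flow-invariant $2$-form annihilated by the flow direction. In the case $n=2$ needed for Theorem~A this closes the step immediately: writing $d\eta_c=f_c\,d\th_1\wedge d\th_2$, the relation $i_X\,d\eta_c=0$ with $X\neq0$ forces $f_c\equiv0$.

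Once the leaves are $C^1$ Lagrangian graphs, the conclusion is quick. Because the $\Lam_c$ foliate $\EE_g$, every orbit of $\phi_g$ is contained in some Lagrangian graph, so by Theorem~\ref{graphmin} its projection is a minimizing geodesic with no conjugate points. Since every geodesic arises this way, $g$ has no conjugate points, and Theorem~\ref{BI} then forces $g$ to be flat. I expect the main obstacle to be the Lagrangian-ness of the invariant graphs: it is elementary in dimension $2$ by the contraction argument above, but for general $n$ the single relation $i_X\,d\eta_c=0$ is not enough and one must use that the flow directions furnished by the whole foliation sweep out each cotangent fibre (equivalently, invoke that $C^1$ invariant graphs of a geodesic flow are automatically Lagrangian).
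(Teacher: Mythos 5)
Your proposal follows the same route as the paper's own (very terse) proof: the forward implication is treated as an explicit/obvious computation, and the converse is exactly the chain used in the paper, namely Theorem~\ref{graphmin} (orbits in $C^1$ Lagrangian graphs have no conjugate points) followed by Theorem~\ref{BI} (no conjugate points on $\T^n$ implies flat). The genuine difference is that you noticed, and tried to close, a point the paper passes over in silence: Theorem~\ref{graphmin} requires \emph{Lagrangian} graphs, whereas the corollary only hypothesizes invariant graphs. Your contraction argument for this step is correct: tangency of the Hamiltonian field to $\Graph(\eta_c)$ gives $-\partial_\th H=D\eta_c\cdot X$, differentiating $H(\th,\eta_c(\th))\equiv 1$ gives $\partial_\th H=-(D\eta_c)^{T}X$, hence $\bigl(D\eta_c-(D\eta_c)^{T}\bigr)X=0$, i.e.\ $i_X d\eta_c=0$ with $X(\th)=\partial_p H(\th,\eta_c(\th))$ nowhere zero (by Euler's relation $\langle\eta_c,\partial_pH\rangle=2H=2$), and for $n=2$ this forces $d\eta_c=0$. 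Since $n=2$ is the only case needed for Theorem~A, your proof is complete where it matters, and is in fact more careful than the one in the paper.

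One warning: your parenthetical claim for $n>2$ --- that $C^1$ invariant graphs of a geodesic flow are automatically Lagrangian --- is false. On the flat torus $\T^3$ take
\[
\eta(\th)=\cos\bigl(f(\th_3)\bigr)\,d\th_1+\sin\bigl(f(\th_3)\bigr)\,d\th_2,
\]
with $f$ smooth, periodic and non-constant. Then $\Graph(\eta)$ lies in the unit cotangent bundle since $|\eta|\equiv 1$; it is invariant, because the Hamiltonian field at $(\th,\eta(\th))$ is $(2\eta(\th),0)$, whose third angular component vanishes while $\eta$ depends only on $\th_3$, so $D\eta\cdot 2\eta(\th)=0$; and yet $d\eta=f'(\th_3)\bigl(-\sin f\,d\th_3\wedge d\th_1+\cos f\,d\th_3\wedge d\th_2\bigr)\neq 0$. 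Your alternative suggestion (that the foliation sweeps out each cotangent fibre) does not obviously help either: the relation $i_Xd\eta_c=0$ for a fixed leaf involves only that leaf's own direction field, so other leaves impose no constraint on $d\eta_c$. Thus for $n\geq 3$ the Lagrangian step is a genuine gap --- but it is a gap in the paper as well, since the paper feeds mere graphs into Theorem~\ref{graphmin}; both arguments are airtight only in the case $n=2$ actually used in Theorem~A.
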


Indeed, $(\Longrightarrow)$ is obvious. Conversely, assume that $\EE_g$ is foliated by $\phi_g$-invariant tori that are graphs over the base $\T^n$. Therefore, by Theorem \ref{graphmin}, a geodesic  cannot  have conjugate points, and by the Hopf theorem, $g$ is flat.
\vspace{0.2cm}

The proof of Theorem A is based on corollary \ref{eq}  and on the following  particular properties of pertubations of action-angle Hamiltonian systems with two degrees of freedom defined by a quadratic form. 

\begin{lem}\label{Graph}
Consider a positive definite quadratic form $h$. Let $H$ be the Hamiltonian function on $\T^2\times \R^2$ defined by $H(\th,r)=h(r)$.
Let $f:\T^2\times\R^2$ be a $C^5$ function with $||f||_{C^5}=1$. For $\eps>0$, we set $H_\eps: H+\eps f$, and we denote by $\phi_\eps$ the Hamiltonian flow associated with $H_\eps$. There exists $\eps_0$ such that for all $\eps\leq \eps_0$, 
\begin{enumerate}
\item there exist $\phi_\eps$-invariant tori in $H_\eps\inv(\{1\})$ that are the graphs of $C^1$ functions $\T^2\rit \R^2$,
\item if $\Tt\subset H_\eps\inv(\{1\})$ is  a $\phi_\eps$-invariant torus that is homotopic to $\T^2\times\{0\}$, then $\Tt$ is the graph of a Lipschitz function  $\T^2\rit \R^2$,
\item there does not exist any $\phi_\eps$-invariant Klein bottle in $H_\eps\inv(\{1\})$.
\end{enumerate}
\end{lem}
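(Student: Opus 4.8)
The plan is to establish the three statements by appealing to the classical perturbation theory of integrable Hamiltonian systems, with KAM theory providing (1) and the Birkhoff/Herman graph-transform machinery providing (2) and (3). Since $h$ is a positive definite quadratic form, the unperturbed system $H(\th,r)=h(r)$ is isochronous in the sense that $\om(r)=\na h(r)$ is a \emph{linear} isomorphism of $\R^2$; in particular the frequency map is a global diffeomorphism, so on the energy level $H\inv(\{1\})$ the rotation vectors sweep out an entire ellipse of directions. This nondegeneracy is exactly what is needed to run KAM.

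\medskip

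\noindent\textbf{Statement (1).} First I would invoke the isoenergetic KAM theorem. The frequency map $r\mapsto\na h(r)$ being a linear isomorphism, the iso-energetic nondegeneracy condition holds on $H\inv(\{1\})$, so for $\eps$ small enough there is a positive-measure Cantor set of $\phi_\eps$-invariant tori carrying Diophantine rotation vectors, each $C^1$-close to an unperturbed torus $\T^2\times\{r\}$. Since the $C^5$ regularity of $f$ with $\|f\|_{C^5}=1$ gives a uniform bound allowing one fixed threshold $\eps_0$, and since the surviving tori are uniformly $C^1$-close to graphs $\T^2\to\R^2$ with the unperturbed torus being the zero graph, each survives as a $C^1$ graph. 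The key point is uniformity of $\eps_0$ in $f$ on the unit sphere of $C^5$, which follows from the explicit quantitative KAM estimates.

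\medskip

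\noindent\textbf{Statements (2) and (3).} These are the heart of the lemma, and I expect (2) to be the main obstacle. The idea is a graph/Lipschitz-rigidity argument in the spirit of Birkhoff's theorem on invariant circles of twist maps, here in the setting of a Hamiltonian flow. For (2), let $\Tt$ be any $\phi_\eps$-invariant torus homotopic to $\T^2\times\{0\}$. I would reduce to a symplectic return map on a two-dimensional section transverse to one angle direction, obtaining an exact symplectic twist map whose twist is inherited from the positive definiteness (hence strict convexity) of $h$, with the perturbation only mildly deforming this monotone twist for small $\eps$. Birkhoff's theorem then forces the invariant curve of this twist map corresponding to $\Tt$ to be a Lipschitz graph over the base circle, with a Lipschitz constant controlled by the twist bounds; reassembling over the suspension yields that $\Tt$ is the graph of a Lipschitz function $\T^2\to\R^2$. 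For (3), I would argue by contradiction: a $\phi_\eps$-invariant Klein bottle in $H_\eps\inv(\{1\})$ would either be homotopically incompatible with the torus fibration of the nearly-integrable system or would carry a pair of periodic orbits forcing a non-orientable return dynamics inconsistent with the orientation-preserving exact symplectic twist maps produced above; more directly, the unperturbed level $H\inv(\{1\})$ admits no invariant Klein bottle, and a $C^1$-small perturbation cannot create the required $\Z/2$ monodromy in the foliation, so no such bottle exists for $\eps\le\eps_0$.

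\medskip

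\noindent\textbf{Main obstacle.} The delicate point is carrying the twist-map rigidity of part (2) to \emph{every} invariant torus in the homotopy class, not merely the KAM tori of part (1): one must verify that the reduction to an exact symplectic monotone twist map is valid globally for all $\eps\le\eps_0$ (so that the convexity of $h$ genuinely survives as a uniform twist condition), and that the homotopy hypothesis on $\Tt$ is precisely what guarantees the associated invariant curve is a rotational (essential) one, to which Birkhoff's graph theorem applies. I would devote the main effort to setting up this section map and controlling its twist uniformly in the $C^5$-unit perturbation $f$.
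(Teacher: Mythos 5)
Your overall skeleton---isoenergetic KAM for (1), a Poincar\'e section carrying a twist-type return map plus Birkhoff's theorem for (2)---is the same as the paper's, but the proposal is missing the one idea that makes the section argument legitimate, and this is a genuine gap. On the energy level $H_\eps\inv(\{1\})$ there is \emph{no} global section ``transverse to one angle direction'': writing $h(r)=ar_1^2+br_2^2+cr_1r_2$, the angular speed $\dot\th_1=2ar_1+cr_2+O(\eps)$ vanishes and changes sign on the level (the ellipse $h(r)=1$ meets the line $2ar_1+cr_2=0$), and likewise for $\dot\th_2$. So the ``exact symplectic monotone twist map'' you want to produce does not exist as a return map of the whole level, and the obstacle you flag (uniformity of the twist in $f$) sits downstream of this prior problem. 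The paper's solution is a confinement step: it first uses KAM to produce four \emph{specific} invariant tori, one over a Diophantine point in each of the four arcs into which the lines $\{\dot\th_1=0\}$ and $\{\dot\th_2=0\}$ cut the ellipse; being invariant graphs inside the $3$-dimensional level, these tori separate $H_\eps\inv(\{1\})$ into four invariant submanifolds with boundary, on each of which one of $|\dot\th_1|$, $|\dot\th_2|$ is bounded below by a constant $\al/2$ independent of $\eps$. Every $\phi_\eps$-invariant surface is trapped in one of these four pieces, and only there is the return map on $\{\th_1=\th_1^0\}$ (or $\{\th_2=\th_2^0\}$) well defined, area-preserving, and an $O(\sqrt{\eps})$-perturbation of an explicit unperturbed twist map. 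Without this barrier argument neither the existence of your return map nor the applicability of Birkhoff's theorem is justified. A smaller omission in (2): the intersection of $\Tt$ with the section is a priori a finite union of circles permuted by the return map; the paper needs an area-preservation argument to show these circles coincide, and the homotopy hypothesis on $\Tt$ to show the single resulting circle is essential, before Birkhoff can be invoked.

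Your treatment of (3) is also not a proof. The claim that ``a $C^1$-small perturbation cannot create the required $\Z/2$ monodromy'' is exactly the kind of reasoning that fails in perturbation theory: small perturbations routinely create invariant sets with no unperturbed counterpart (hyperbolic orbits, separatrices, islands), and Klein bottles genuinely do occur as invariant sets of Bott-integrable flows on $3$-dimensional energy levels, so something quantitative must exclude them here. The paper's actual argument again rests on the confinement: any $\phi_\eps$-invariant surface $\LL$ lies in one of the four invariant pieces, its intersection with a section there is a finite union of circles cyclically permuted by the return map, and the flow itself provides a homotopy from the identity to that return map; hence $\LL$ is a mapping torus of a circle diffeomorphism homotopic to the identity, and is therefore a torus, never a Klein bottle. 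Your remark about ``non-orientable return dynamics'' gestures at this, but the suspension argument is the substance, and it too needs the four KAM barriers before any section exists.
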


The proof of this lemma is given in section 4. Actually, (1) is exactly the result of the KAM theorem, and (3) is an easy consequence of the particular property for KAM tori to ``block'' the dynamics in $3$-dimensional energy levels and of the  form of the Hamiltonian $H$. The main interest of lemma \ref{Graph} is concentrated in (2).

\begin{proof}[Proof of Theorem A]
We denote by $H_{g_0}$ the geodesic Hamiltonian function on $T^*\T^2$ defined by $g_0$. 
For $\eps>0$, we denote  by $\UU_\eps$ the set of $C^5$ Riemannian metrics $g$ on  $\T^2$, such that $||g-g_0||_{C^5}\leq \eps$ (where $||\cdot||_{C^5}$ is the $C^5$-norm on the space of metrics on $\T^2$). For $g\in \UU_\eps$, we denote by $H_g$ the geodesic Hamiltonian function on $T^*\T^2$ defined by $g$. 
Fix a compact neighborhood $K$ of $H_{g_0}\inv(\{1\})$. There exists $c>0$, independent of $\eps$, such that $||H_{g_0}-H_g||_{K,C^5}\leq c\eps$ (where here, $||\cdot||_{K,C^5}$ is the $C^5$-norm on the space of functions $H :K\rit\R$).

By lemma \ref{Graph} (1), if $\eps$ is small enough, there exist invariant tori in $H_g\inv(\{1\})$ that are the graphs of  $C^1$ functions: $\T^2\rit \R^2$.

Assume now that $g\in \DD\CC$ and that $g$ is not flat. We denote by $f$ a nondegenerate Bott integral for $\phi_g$ in restriction to the unit cotangent bundle $\EE_g$ of $\T^2$. We want to see that $\EE_g$ contains a hyperbolic orbit. Since $H_g$ is dynamically coherent, it suffices to show that $\EE_g$ contains a $\8$-level.

By corollary \ref{eq}, at least one leaf  of the foliation induced by $f$ in $\EE_g$ is not a $C^1$ graph over $\T^2$. Let $\LL$ be such a leaf.
By lemma \ref{Graph} (3), $\LL$ is either an elliptic orbit, or a $\8$-level, or a torus,.
Note that  such tori are $C^1$ submanifolds.

If $\LL$ is a $\8$-level, the proof is complete. 
If $\LL$ is an elliptic orbit,  there exists a neighborhood $U$ of $\LL$, saturated for $f$, such that $\jA=U\setm \LL$ is a maximal action-angle domain. The domain $\jA$  is foliated by tori homotopic to $\LL$, these tori are obviously non homotopic to $\T^2\times\{0\}$.
Now if $\LL$ is a torus, by lemma \ref{Graph} (2), this torus is not homotopic to $\T^2\times\{0\}$. So $\LL$ is contained in a maximal action-angle domain  $\jA$  that is foliated by tori non homotopic to $\T^2\times\{0\}$.

Now, each  torus $\Tt$ that is a $C^1$ graph over $\T^2$ is contained in an action-angle domain $\jA'$ in $\EE_g$. Such a  domain $\jA'$ is foliated by tori homotopic to $\Tt$ (indeed, by lemma \ref{Graph} (2), these tori are $C^1$ graph  over $\T^2$). Therefore the boundary of one of the domains $\jA'$ must intersect the boundary of one of the previous domains $\jA$ and this intersection must be contained in a $\8$-level. 
\end{proof}

\section{Proof of lemma \ref{Graph}}

The proof  is based on two results of the theory of dynamical systems. The first one is the KAM theorem, describing the behavior of small perturbations of Hamiltonian systems in action-angle form, and the second one is the Birkhoff theorem, that deals with particular dynamical systems on the cylinder $\T\times I$ (where $I$ is an interval of $\R$), namely the twist maps.

In section 4.1. and 4.2 we briefly recall these two results. The proof of lemma \ref{Graph} is given in section 4.3.

\subsection{Basic KAM Theory }

In this section, $B$ is a bounded domain of $\R^n$ and $h:B\rit \R$ is a smooth function. 
We consider the Hamiltonian function  $H:\T^n\times B\rit \R:\,(\th,r)\ma h(r)$ and we denote by $\phi_H$ the Hamiltonian system associated with $H$. It is a system in action-angle form: the whole phase space is completely foliated by the invariant tori $\T^n\times\{r\}$. On each of these tori $\T^n\times\{r\}$ the Hamiltonian system $\phi_H$ induces a \emph{quasi-periodic motion}, that is, a linear flow with frequency $\om(r):=\nabla h(r)$. 
With any $\om\in \R^n\setm\{0\}$, we associate the following submodule of $\Z^n$:
\[
\MM(\om):=\{k\in\Z^n\,|\,\langle k,\om\rangle =0\}=(\R\om)^\perp\cap\Z^n,
\]
where $\langle\, , \rangle$ is the canonical scalar product on $\R^n$. 
A vector $\om\in\R^n\setm\{0\}$ is said to be resonant if $\MM(\om)\neq \emptyset$ and nonresonant otherwise.
If $\om$ is nonresonant,  all the orbits are dense. 

In an informal way, the KAM theory is the study of \emph{persistence} of certain nonresonant tori under small perturbations of the system, that is, for systems of the form $H+\eps f$, where $f$ is a ``sufficiently regular'' bounded function. 
The work of Kolmogorov, Arnol'd and Moser show that tori corresponding to ``strongly nonresonant'' frequency vectors persist if the pertubation is small enough and if $H$ satisfies some nondegeneracy conditions. This statement is made more precise below. There are a lot of articles, manuals, surveys on the fundamentals of KAM theory. We  refer for example  to \cite{ArnMM} and \cite{AKN}.

We say that a hypersurface  $\Ss\subset\R^n$  satisfies the transversality property (T) if for every point $u\in\Ss$, $\R u$ is transverse to $\Ss$.
For a  value $e$ of $h$, we set $\Om_e:=\om(H\inv(\{e\}))$.

\begin{Def}\label{iso1}
The Hamiltonian system associated with $H$ is said to be \emph{isoenergetically nondegenerate} in the neighborhood of $H\inv(\{e\})$ if there exists a neighborhood $V$ of $e$ in $h(B)$, such that $\om$ never vanishes in $H\inv(V)$ and that for  any $e\in V$,  $\Om_e$ satisfies (T).
\end{Def}
In particular, $\Om_e$ is $(n-1)$-dimensional. One easily checks that if $h$ is strictly convex, $H$ is isoenergetically nondegenerate in the neighborhood of its regular energy levels.

Let us recall  the following definition that explains what  means for a frequency vector to be ``strongly nonresonant''.

\begin{Def} Fix two positive numbers $\tau,\ga$. We say that $\om\in \R^n$ belongs to $\DD(\tau,\ga)$ if
\[
|\langle\om,k\rangle|\geq \frac{\ga}{||k||^{\tau}},\quad \forall k\in\, \Z^n\setm\{0\}.
\]
The set $\DD(\tau,\ga)$ is the set of \emph{Diophantine vectors of type $(\tau,\ga)$}. The union
\[
\DD(\tau):=\bigcup_{\ga>0}\DD(\tau,\ga)
\]
is the set of \emph{Diophantine vectors of type} $\tau$.
\end{Def}
It is well known that $\DD(\tau)$ has full measure when $\tau>n-1$.

\begin{thm}
\textbf{The KAM Theorem.} Let $k>2n$ and $f:\T^n\times \R^n$ be a $C^k$ function with $||f||_{C^k}=1$.  For $\eps>0$, we set $H_\eps:=H+\eps f$. We denote by $\phi_\eps$ the Hamiltonian flow associated to $H_\eps$.

Fix $\tau\in\, ]n-1,\demi k-1[$ and $\ga>0$. Let $e\in  H(B)$. Assume that the Hamiltonian system is isoenergetically nondegenerate in the neighborhood of $H\inv(\{e\})$. There exists $\eps_0>0$, such that, for all $0<\eps<\eps_0$ and for all $r\in h\inv(\{e\})$ such that $\om(r)\in \DD(\tau,\ga)\cap \Om_e$, there exists a $\phi_\eps$-invariant  torus $\Tt_{r}$ that satisfies:
\begin{itemize}
\item $\Tt_{r}$ is homotopic to $\T^n\times\{r\}$,
\item there exists $\de>0$ independent of $\eps$, $\Tt_{r}\subset \T^n\times [r-\de\sqrt{\eps},r+\de\sqrt{\eps}]$,
\item $H_\eps(\Tt_r)=e$.
\end{itemize}
If moreover $H$ is convex, $\Tt_r$ is the graph of a $C^1$ function $\ell:\T^2\rit \R^2$, with $||\ell-r||_{C_1}\leq c\sqrt{\eps}$, for a positive number $c$ independent of $\eps$.
\end{thm}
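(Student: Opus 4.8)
The plan is to construct $\Tt_r$ as the image of the unperturbed torus $\T^n\times\{r\}$ under the limit of a superconvergent sequence of canonical changes of coordinates, following the Kolmogorov--Arnol'd--Moser scheme, and to accommodate the fixed-energy constraint by Arnol'd's isoenergetic reduction. Since $f$ is only of class $C^k$, I would run the scheme in the real-analytic category after a smoothing step (Moser's method): approximate $f$ by analytic functions $f_\nu$ on complex strips of width $s_\nu\downarrow 0$, with Jackson-type estimates controlling $\|f-f_\nu\|$ and the analytic norm of $f_\nu$ in terms of $s_\nu$ and $\|f\|_{C^k}=1$. The hypothesis $k>2n$ is exactly what makes the interval $]n-1,\tfrac12 k-1[$ for $\tau$ nonempty, so that $\DD(\tau,\ga)$ carries Diophantine vectors (full measure, as $\tau>n-1$) while the smoothing stays compatible with the small-divisor losses incurred below.

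First I would set up the inductive step. Fix $r$ with $\om(r)=\nabla h(r)\in\DD(\tau,\ga)\cap\Om_e$ and translate the actions so that $r=0$. Writing the current Hamiltonian as $h+P$ with $P$ of size $\eps_\nu$ on a strip of width $s_\nu$, one seeks a canonical transformation $\Phi_\nu$, the time-one flow of a Hamiltonian $S(\th,r)$, so that $(h+P)\circ\Phi_\nu$ has a new perturbation of size $\lesssim \eps_\nu^2\, s_\nu^{-a}$ on a slightly smaller strip. Since $\{h,S\}=\langle\om(0),\partial_\th S\rangle$, the first-order cancellation reduces to the homological equation
\[
\langle\om(0),\partial_\th S\rangle = \bar P - P,
\]
whose Fourier solution introduces the small divisors $\langle k,\om(0)\rangle$; the bound $|\langle k,\om(0)\rangle|\ge \ga\|k\|^{-\tau}$ makes $S$ well defined with a controlled loss of $\tau+n$ derivatives, i.e. a factor $s_\nu^{-(\tau+n)}$. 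The averaged part $\bar P$ (including its linear-in-action piece, which shifts the frequency) is absorbed by adjusting the action translation together with a counterterm, and it is here that the fixed-energy/fixed-direction requirement enters: I would carry $e$ and the counterterm as parameters and invoke the isoenergetic nondegeneracy, that is the transversality property (T) of $\Om_e$, to solve at each step for the correction that keeps $H_\eps=e$ and the rotation vector proportional to $\om(0)$. Property (T) guarantees that the linear system governed by the transverse part of $D\om=D^2h$ along $\Om_e$ is invertible, so this adjustment is well posed and of the same order as $P$.

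Next I would prove convergence. Choosing $s_{\nu+1}=s_\nu/2$ and $\eps_{\nu+1}=C\,\eps_\nu^2\, s_\nu^{-(2\tau+2n+2)}$, the quadratic gain $\eps_\nu^2$ dominates the algebraic loss provided $\eps_0$ is small enough, so $\eps_\nu\to 0$ superexponentially and the products $\prod_\nu(1+\|\Phi_\nu-\Id\|)$ converge; the smoothing errors $\|f-f_\nu\|$ are summable for the same reason, precisely because $k>2\tau+2$. The composition $\Phi=\lim \Phi_1\circ\cdots\circ\Phi_\nu$ then converges in the $C^1$ topology and conjugates the flow on $\T^n\times\{0\}$ to the linear flow of frequency $\om(0)$. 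Setting $\Tt_r:=\Phi(\T^n\times\{0\})$ yields a $\phi_\eps$-invariant torus; it is homotopic to $\T^n\times\{r\}$ because each $\Phi_\nu$ is $C^1$-close to the identity, it satisfies $H_\eps(\Tt_r)=e$ by the isoenergetic construction, and tracking the sizes of the successive transformations localizes it in the band $\T^n\times[r-\de\sqrt{\eps},r+\de\sqrt{\eps}]$ for a $\de$ independent of $\eps$. When $H$ is convex, $D^2h$ is definite, so (T) holds automatically and the transverse inversions above are uniformly controlled; the limit $\Phi$ is then a $C^1$ exact-Lagrangian perturbation of the identity, whence $\Tt_r$ is the graph of a $C^1$ map $\ell$ with $\|\ell-r\|_{C^1}\le c\sqrt{\eps}$.

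The main obstacle is the classical one: the interplay between the small divisors and the loss of derivatives. Solving the homological equation costs $\tau+n$ derivatives at every step, so a naive linear iteration diverges; the whole point is that Newton's quadratic convergence outruns this loss, but only after the $C^k$ data are regularized on strips tuned to the step size. Making the three competing scales close up — the perturbation size $\eps_\nu$, the analyticity width $s_\nu$, and the Diophantine loss $s_\nu^{-(\tau+n)}$ — is the delicate heart of the argument, and it is exactly the constraint $\tau<\tfrac12 k-1$, equivalently $k>2\tau+2$, that forces the smoothing errors to decay fast enough to be reabsorbed at each stage.
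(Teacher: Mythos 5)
The paper offers no proof of this statement at all: it quotes the KAM theorem as a classical black box, pointing to the standard literature (\cite{ArnMM}, \cite{AKN}), and uses it only as an ingredient in the proof of Lemma 3.1. Your sketch is a faithful outline of precisely the proof found in those references — the Kolmogorov--Arnol'd--Moser quadratic iteration with the homological equation and small-divisor bounds, Moser's analytic smoothing to handle $C^k$ data (with the arithmetic $k>2\tau+2$, i.e.\ $\tau<\tfrac12 k-1$, correctly identified as the compatibility constraint), and Arnol'd's isoenergetic reduction via the transversality property (T) to pin down both the energy level and the frequency direction — so there is no divergence of approach to report, only the caveat that, as a sketch, the hard quantitative content (the precise loss-of-derivatives bookkeeping, the convergence of the composed transformations, and the $\sqrt{\eps}$ localization and $C^1$ graph estimates) is asserted rather than carried out, exactly as it is in the paper itself.
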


\subsection{Twist maps}

In this short section, we consider a particular class of maps on the cylinder $C:=\T\times [a,b[$ whith $a\in\R$ and $b\in\, ]a,+\infty]$. We could also consider cylinders of the form $C:=\{(\th,r(\th))\,|\, a\leq r(\th)\leq g(\th)\}$, where $g:\T\rit ]a,+\infty[$ is a continuous function. The cylinder $C$ is endowed with the canonical symplectic form $\Om:=d\th\wedge dr$.

 The universal covering  of $C$ is the strip $\R\times [a,b[$. We denote by $\pi$ the canonical projection $\pi:\R\times I\rit  C$. A lift of a map $f:C\rit C$ is a map $F:\R\times I\rit \R\times I$ such that $\pi\circ F=f\circ \pi$. 

\begin{Def}
An  \emph{area-preserving twist map} on $C$ is a diffeomorphism $f:C\rit C$ such that:
\begin{enumerate}
\item $f$ preserves the symplectic form,
\item $f$ preserves the boundary $\T\times\{a\}$ in the sense that there exists $\eps>0$ such that there exists $c\in ]a,b[$ such that if $(\th,r)\in\T\times[0,\eps[$, then $f(\th,r)\in\T\times [a,c[$,
\item \textbf{Torsion condition}: if $F$ is any lift of $f$ to $\R\times I$, then, $\Dp{F}{r}(x,r)>0$.
\end{enumerate}
\end{Def}

\begin{rem}\label{twisttorsion}
Let $f_\eps$ be a diffeomorphism $C\rit C$ such that $f_\eps^*\Om=\Om$ and that $||f-f_\eps||_{C^1}\leq \eps$, with $\eps>0$. Then, for $\eps$ small enough, $f_\eps$ is a twist map.
\end{rem}

Recall that if $f:X\rit X$ is a continuous map of a metric space $X$, a point $x\in X$ is said to be \emph{nonwandering for} $f$ if for any neighborhood $U$ of $x$, there exists an integer $n\in\N^*$ such that $f^n(U)\cap U\neq \emptyset$. We denote by $NW(f)$ the set of  nonwandering points. 

We refer to \cite{HK-95} for a proof of the following theorem due to Birkhoff.

\begin{thm}\textit{\textbf{Birkhoff's Theorem.}}
Let $f:C\rit C$ be an area-preserving twist map. Let $\Dd$ be an  $f$-invariant open relatively compact domain containing $\T\times\{a\}$ and with connected boundary $\partial\Dd$. Assume that $\Dd\subset NW(f)$. Then $\partial \Dd$ is  the graph of Lipschitz function $\T\rit\, ]a,b[$.
\end{thm}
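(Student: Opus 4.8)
The plan is to split the conclusion into its two genuine assertions and prove them in turn: first that $\partial\Dd$ meets every vertical fiber $\{\th\}\times\,]a,b[$ in exactly one point (the graph property), and then that the resulting function is Lipschitz. Throughout I fix a lift $F=(F_1,F_2):\R\times[a,b[\,\rit\R\times[a,b[$ of $f$ and record the only two dynamical inputs. The torsion condition $\Dp{F_1}{r}>0$ says that $F$ carries a vertical segment to a curve along which the angular coordinate strictly increases with height, and running the same statement for $F\inv$ reverses the inequality. Area preservation enters only through the hypothesis $\Dd\subset NW(f)$. I also note that, since $f$ is a homeomorphism preserving $\Dd$, it preserves the open exterior $\Dd^+:=C\setm\ov{\Dd}$ and the common boundary $\partial\Dd$; and since $f$ is orientation preserving, $F$ preserves the linear order of the angular coordinate along any invariant graph.

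For the graph property I set $\psi(\th):=\sup\{r:(\th,r)\in\Dd\}$. Because $\Dd$ is open and contains $\T\times\{a\}$, each fiber $\Dd_\th$ contains an interval $[a,a+\eta_\th[$ with $\eta_\th>0$, and openness of $\Dd$ gives $\Dd\subset\{(\th,r):a\le r<\psi(\th)\}$; relative compactness of $\Dd$ gives $\psi<b$. The graph property is therefore equivalent to the statement that every fiber $\Dd_\th$ is connected, i.e. equal to $[a,\psi(\th)[$. A failure would produce an exterior bubble: a component of $\Dd^+$ lying below $\psi(\th)$ and capped above by a floating shelf $S\subset\Dd$ whose lower boundary is an arc of $\partial\Dd$ with $\Dd^+$ immediately beneath it. The key step is to show, using the twist, that such a shelf is wandering. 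Concretely, I would choose a small open box $B\subset S\subset\Dd$ sitting directly above the offending arc of $\partial\Dd$ and follow the horizontal shearing: because points just above the arc are dragged monotonically past the boundary curve by the torsion while the invariant curve $\partial\Dd$ blocks any return, one shows the forward images $f^n(B)$ are eventually pairwise disjoint. Then $B$ is wandering, contradicting $B\subset\Dd\subset NW(f)$. This excludes every exterior bubble, so all fibers are intervals and $\partial\Dd$ is the graph of $\psi$.

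Establishing that the shelf wanders is the main obstacle. One must convert the infinitesimal torsion inequality into a genuine global disjointness of the iterates $f^n(B)$, and it is precisely here that the Birkhoff ``tongue'' mechanism and the role of the nonwandering hypothesis are delicate: without the recurrence assumption $\Dd\subset NW(f)$ such tongues genuinely occur, so the argument cannot be purely topological and must exploit area preservation together with the twist. Everything preceding this step is bookkeeping, and everything following it is a cone estimate.

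Finally I would prove that $\psi$ is Lipschitz from the two-sided torsion bound, which simultaneously yields its continuity. On the compact region swept out by $\partial\Dd$ the inverse map satisfies $\Dp{(F\inv)_1}{r}\le-\de'<0$ and $|\Dp{(F\inv)_1}{\th}|\le M$ for uniform constants $\de',M>0$. Take $\th<\th'$ and boundary points $P=(\th,\psi(\th))$, $P'=(\th',\psi(\th'))$ with $\psi(\th')-\psi(\th)>0$. Since $\partial\Dd$ is $f\inv$-invariant and $F\inv$ preserves the order of the angular coordinate along it, the first coordinate of $F\inv(P')$ must stay to the right of that of $F\inv(P)$, so $(F\inv)_1(P')-(F\inv)_1(P)\ge0$; on the other hand the mean value theorem and the two bounds give $(F\inv)_1(P')-(F\inv)_1(P)\le M(\th'-\th)-\de'\bigl(\psi(\th')-\psi(\th)\bigr)$. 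Combining forces $\psi(\th')-\psi(\th)\le (M/\de')(\th'-\th)$. The symmetric estimate with $F$ itself, using its positive torsion, bounds the downward slopes, and together they give a uniform Lipschitz constant for $\psi:\T\rit\,]a,b[$, completing the proof.
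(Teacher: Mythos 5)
The paper itself gives no proof of this theorem (it refers the reader to \cite{HK-95}), so your attempt can only be measured against the standard proof. Measured that way, it has a genuine gap at its center. Your skeleton is the right one --- first show that every fiber $\Dd_\th$ is an interval $[a,\psi(\th)[$, then obtain the Lipschitz bound from a two-sided cone estimate --- but the entire content of Birkhoff's theorem is the step you yourself label ``the main obstacle'' and then do not carry out: that an exterior bubble capped by a ``floating shelf'' forces a wandering box. You write that ``one shows the forward images $f^n(B)$ are eventually pairwise disjoint'', but no argument is offered, and the heuristic you invoke is not available. The torsion condition $\Dp{F_1}{r}(x,r)>0$ controls how vertical vectors are tilted, not the sign of the angular drift, so points just above the offending arc are not ``dragged monotonically'' in any particular direction; and $\partial\Dd$ cannot ``block any return'', since at this stage $\partial\Dd$ is not known to be a curve --- that is exactly what is being proved. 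Converting the infinitesimal twist into disjointness of the iterates $f^n(B)$ is where Birkhoff's tongue construction and area preservation do real work; acknowledging the difficulty does not discharge it. As written, the proposal is a plan whose one nontrivial step is missing.

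There is also a circularity to repair in the Lipschitz step. Even granted that every fiber is an interval, $\partial\Dd$ need not be a graph: if $\psi$ has a jump, $\partial\Dd$ contains a vertical segment, and ruling this out is part of what the cone estimate must accomplish. But your estimate rests on the claim that $F\inv$ preserves the order of the angular coordinate along the invariant boundary, a fact which for a homeomorphism follows from the boundary being a \emph{continuous} invariant curve --- again, precisely what is not yet known; for the graph of a possibly discontinuous function it is unjustified. The standard way out (Herman's cone argument, or the proof in \cite{HK-95}) plays the cones against the invariance of the open region $\Dd$ rather than against an order structure on the boundary: a boundary segment steeper than the cone opening would, after applying $F$ or $F\inv$, be tilted so as to send points of $\Dd$ to the exterior of $\Dd$ (or exterior points into $\Dd$), contradicting $f(\Dd)=\Dd$. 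With that replacement, your inequality $\psi(\th')-\psi(\th)\le (M/\de')(\th'-\th)$ and its symmetric counterpart using the positive twist of $F$ are correct and yield both the continuity of $\psi$ and the Lipschitz bound.
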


\subsection{Proof of lemma \ref{Graph}.}

\begin{proof}[Proof of lemma \ref{Graph}] 
One juste has to prove (2) and (3).
We denote respectively by $X$ and $X_\eps$ the vector fields associated with the Hamiltonian functions $H$ and $H_\eps$, and by $\phi$ and $\phi_\eps$ their respective flows.
 
We begin with studying the geometry of $H_\eps\inv(\{1\})$.  Set $\Hh:=\{(r_1,r_2)\in \R^2\,|\, h(r_1,r_2)=1\}$. Therefore $H\inv(\{1\}):=\T^2\times \Hh$.
The vector field $X$ reads:
\[
\begin{aligned}
\dot{\th_1} & =2ar_1+cr_2,\qquad  &\dot{r_1} & = 0\\
\dot{\th_2} & = cr_1+2br_2,\qquad  &\dot{r_2} & =0.
\end{aligned}
\]
We denote by $D_1$ and $D_2$ the lines  with respective equations $ar_1+cr_2=0$, $br_2+cr_1=0$ in $\R^2$. Then, for $i=1,2$, $D_i$ intersects $\Hh$ at two points $A_i,B_i$. The connected components of $\Hh\setm\{A_1,B_1,A_2,B_2\}$ are 
\begin{align*}
\DD^{++}:=\{r\in\Hh\,|\,ar_1+cr_2>0,\, cr_1+br_2>0 \},\\ \DD^{+-}:=\{r\in\Hh\,|\,ar_1+cr_2>0,\, cr_1+br_2<0 \},\\
\DD^{--}:=\{r\in\Hh\,|\,ar_1+cr_2<0,\, cr_1+br_2<0 \},\\ \DD^{-+}:=\{r\in\Hh\,|\,ar_1+cr_2<0,\, cr_1+br_2>0 \}.
\end{align*}
In each of the domains $\DD\sss$, we fix a point $A_\eps\sss$, where $*$ stand for $+$ or $-$, such that $A_\eps\sss\in h\inv(1-\eps)$.
\vspace{0.2cm}

Now fix $\th^0\in \T^2$. 
Consider the surface $\Sig_{\th^0}$ defined by $\th=\th^0$. Since $\Sig_{\th^0}$ is transverse to $H\inv(\{1\})$, for $\eps$ small enough,
$\Sig_{\th^0}$ is transverse to $H_\eps\inv(\{1\})$ and
their intersection is a compact submanifold of dimension $1$, that is, a circle. Moreover, the projection $p(H_\eps\inv(\{1\}))$ on $\R^2$ of this circle is contained in the annulus delimited by the ellipses with equations $h(r)=1+\eps$ and $h(r)=1-\eps$.

Consider the four lines $D_1^+:=(A\pp_\eps,A\pl_\eps)$, $D_1^-:=(A\lp_\eps,A\mm_\eps)$, $D_2^+:=(A\lp_\eps,A\pp_\eps)$ and $D_2^-=(A\pl_\eps,A\mm_\eps)$.
We denote by $\DD_1^+$ the domain bounded by $D_1^+$ and the ellipses $h(r)=1+\eps$ and $h(r)=1-\eps$, which is contained in the set $\{ar_1+cr_2\geq 0\}$. We define in the same way the domains $\DD_1^-$, $\DD_2^+$ and $\DD_2^-$ (see the simplified drawing in Figure 1).
There exists $\al>0$ such that 
\begin{itemize}
\item $\DD_1^+\subset \{2ar_1+cr_2>\al\}$ and $\DD_1^-\subset \{2ar_1+cr_2<-\al\}$
\item $\DD_2^+\subset \{cr_1+2br_2>\al\}$ and $\DD_1^-\subset \{cr_1+2br_2<-\al\}$.
\end{itemize}

\begin{figure}[h]
\centering
\psset{xunit=0.75,yunit=0.75,runit=0.75,linewidth=0.01}
\begin{pspicture}(5cm,4cm)

\rput(-2,0){
\psellipse[linewidth=.015,fillstyle=solid,fillcolor=black!35](0,0)(3.4,2.4)
\psclip{\psframe[linestyle=none](-1.35,-3)(1.35,3)}
\psellipse[linewidth=.015,fillstyle=solid,fillcolor=white](0,0)(3.4,2.4)
\endpsclip
\psellipse[linewidth=.08,linestyle=dashed](0.15,-0.15)(3,2)
\psellipse[linewidth=.05](0,0)(3,2)
\psellipse[linewidth=.015,fillstyle=solid,fillcolor=white](0,0)(2.6,1.6)
\psline[linewidth=.015](1.35,-2.5)(1.35,2.5)
\rput(1.35,3){$D_1^+$}
\psline[linewidth=.015](-1.35,-2.5)(-1.35,2.5)
\rput(-1.35,3){$D_1^-$}
}
\psline{->}(-1,-2.2)(-1.9,-1.4)
\rput(-0.7, -2.2){$\Hh$}
\rput(-3.35,1.35){\pscircle[fillstyle=solid,fillcolor=black](0,0){.07}}
\rput(-0.65,1.35){\pscircle[fillstyle=solid,fillcolor=black](0,0){.07}}
\rput(-3.35,-1.35){\pscircle[fillstyle=solid,fillcolor=black](0,0){.07}}
\rput(-0.65,-1.35){\pscircle[fillstyle=solid,fillcolor=black](0,0){.07}}
\rput(-1.2,1){$A^{++}_\eps$}
\rput(-2.8,-1){$A^{--}_\eps$}
\rput(-1.2,-1){$A^{+-}_\eps$}
\rput(-2.8,1){$A^{-+}_\eps$}
\rput(-2,-3.5){The domains $\DD_1^+$ and $\DD_1^-$}

\rput(8,0){
\psellipse[linewidth=.015,fillstyle=solid,fillcolor=black!35](0,0)(3.4,2.4)
\psclip{\psframe[linestyle=none](-3.5,-1.35)(3.5,1.35)}
\psellipse[linewidth=.015,fillstyle=solid,fillcolor=white](0,0)(3.4,2.4)
\endpsclip
\psellipse[linewidth=.05](0,0)(3,2)
\psellipse[linewidth=.08,linestyle=dashed](0.15,-0.15)(3,2)
\psellipse[linewidth=.015,fillstyle=solid,fillcolor=white](0,0)(2.6,1.6)
\psline[linewidth=.015](-3.5,1.35)(3.5,1.35)
\rput(4,1.35){$D_2^+$}
\psline[linewidth=.015](-3.5,-1.35)(3.5,-1.35)
\rput(4,-1.35){$D_2^-$}
\psline{->}(-4.1,2)(-2.6,0.6)
\rput(-5.5,2.3){$p(H_\eps\inv(\{-1\}))\cap\Sig_{\th_0}$}
}
\rput(9.35,1.35){\pscircle[fillstyle=solid,fillcolor=black](0,0){.07}}
\rput(9.35,-1.35){\pscircle[fillstyle=solid,fillcolor=black](0,0){.07}}
\rput(6.65,-1.35){\pscircle[fillstyle=solid,fillcolor=black](0,0){.07}}
\rput(6.65,1.35){\pscircle[fillstyle=solid,fillcolor=black](0,0){.07}}
\rput(8.8,1){$A^{++}_\eps$}
\rput(8.8,-1){$A^{+-}_\eps$}
\rput(7.2,-1){$A^{--}_\eps$}
\rput(7.2,1){$A^{-+}_\eps$}
\rput(8,-3.5){The domains $\DD_2^+$ and $\DD_2^-$}
\end{pspicture}
\vskip3cm
\caption{The section $\Sig_{\th_0}$}
\end{figure}

The four domains $\T^2\times \DD_1^\pm$ and $\T^2\times \DD_2^\pm$ form a covering of $H_\eps\inv(\{1\})$. Moreover, since for all $(\th,r)\in\T^2\times \DD_1^+$, $\Dp{H_\eps}{r_1}(\th,r)\neq 0$, by the implicit function theorem there exist an interval $I_2^+$ and a function $R_1^+:\T^2\times I_2^+\rit \R$ such that:
\begin{equation}
H_\eps\inv(\{1\})\cap \left(\T^2\times \DD_1^+\right) =\{(\th,R_1^+(\th,r_2),r_2)\,|\,(\th,r_2)\in\T^2\times I_2^+\}.
\end{equation}
In the same way, there exist intervals $I_2^-,I_1^+$ and $I_1^-$ and functions $R_1^-,R_2^+$ and $R_2^-$ such that
\begin{align*} 
H_\eps\inv(\{1\})\cap\left(\T^2\times \DD_1^-\right) &=\{(\th,R_1^-(\th,r_2),r_2)\,|\,(\th,r_2)\in\T^2\times I_2^-\},\\ H_\eps\inv(\{1\})\cap\left(\T^2\times \DD_2^+\right) & = \{(\th,r_1,R_2^+(\th,r_1)\,|\,(\th,r_1)\in\T^2\times I_1^+\},\\ H_\eps\inv(\{1\})\cap\left(\T^2\times \DD_2^-\right)& =\{(\th,r_1,R_2^-(\th,r_1)\,|\,(\th,r_2)\in\T^2\times I_1^-\}.
\end{align*}

Since the set of Diophantine numbers $\DD(2)$ is dense in $\R^2$ and since $\om:r\ma\om(r)$ is a diffeomorphism, for any $\de>0$, there exists $r\pp\in B(A\pp,\de)$ such that $\om(r)\in \DD(2)$. Moreover, since $A\pp\in \DD_1^+\cap\DD_2^+$, we can assume that $\de$ is small enough so that $r\pp\in \DD_1^+\cap\DD_2^+$. Finally, since $\DD(2)$ is stable under multiplication by a real number and since $\om$ is linear, we can assume that $r\pp\in\Hh$.

Similarly, there exist $r\pl,r\lp$ and $r\mm$ in $\Hh\cap \DD_1^+\cap\DD_2^-$, $\Hh\cap \DD_1^-\cap\DD_2^+$  and $\Hh\cap \DD_1^-\cap\DD_2^-$ whose images by $\om$ are in $\DD(2)$. 
Let $\ga>0$ such that $\{\om(r\pp),\om(r\lp),\om(r\pl),\om(r\mm)\}\subset \DD(2,\ga)$.
\vspace{0.2cm}

By the KAM theorem, there exists $\eps_0>0$ such that for all $\eps<\eps_0$, there exist  tori $\Tt\pp,\Tt\pl,\Tt\lp$ and $\Tt\mm$ in $H_\eps\inv(\{1\})$, invariant  under the flow $\phi_\eps$, that are the graphs of  $C^1$-functions $g\sss:\T^2\rit\R^2$ with $||g\sss-r\sss||_{C^1}\leq c\sqrt{\eps}$.

We set $\Tt\sss:=\{(\th,r\sss(\th))\,|\,\th\in\T^2\}$. 
We choose $\eps<\eps_0$ small enough so that 
\begin{align*}
\Tt\pp\subset\T^2\times \left(\DD_1^+\cap\DD_2^+\right), & \quad\Tt\pl\in\T^2\times\left(\DD_1^+\cap\DD_2^-\right),\\
\Tt\lp\subset\T^2\times \left(\DD_1^-\cap\DD_2^+\right), &\quad \Tt\mm\in\T^2\times\left(\DD_1^-\cap\DD_2^-\right).
\end{align*}

Fix $\th^0\in \T^2$. The intersection $H_\eps\inv(\{1\})\cap \Sig_{\th^0}$ is the union of the curves:
\begin{itemize}
\item $\Cc_1^+(\th^0)$ with endpoints $r\pl(\th_0)$ and $r\pp(\th^0)$,  contained in $\T^2\times\DD_1^+$
\item $\Cc_1^-(\th^0)$ with endpoints $r\mm(\th_0)$ and $r\lp(\th^0)$,  contained in $\T^2\times\DD_1^-$
\item $\Cc_2^+(\th^0)$ with endpoints $r\lp(\th_0)$ and $r\pp(\th^0)$,  contained in $\T^2\times\DD_2^+$
\item $\Cc_2^-(\th^0)$ with endpoints $r\pl(\th_0)$ and $r\mm(\th^0)$,  contained in $\T^2\times\DD_2^-$.
\end{itemize}

The vector field $X^\eps$ reads
\[
\begin{aligned}
\dot{\th_1} & =2ar_1+cr_2+\eps \Dp{f}{r_1}(\th,r),\qquad  \dot{r_1} & =-\eps \Dp{f}{\th_1}(\th,r)\\
\dot{\th_2} & = cr_1+2br_2 +\eps \Dp{f}{r_2}(\th,r),\qquad  \dot{r_2} & =-\eps \Dp{f}{\th_2}(\th,r).
\end{aligned}
\]

We assume that $\eps$ is so small  that: 
\begin{align}\label{torsionr_1}
 2ar_1+cr_2+\eps\Dp{f}{r_1}(\th,r)>\demi \al, \:\:\forall (\th,r)\in \Cc_1^+\\
 2ar_1+cr_2+\eps\Dp{f}{r_1}(\th,r)<-\demi \al, \:\:\forall (\th,r)\in \Cc_1^-\\
 cr_1+2br_2+\eps\Dp{f}{r_2}(\th, r)>\demi \al, \:\:\forall (\th,r)\in \Cc_2^+,\\
 cr_1+2br_2+\eps\Dp{f}{r_2}(\th, r)<-\demi \al, \:\:\forall (\th,r)\in \Cc_2^-.
\end{align}

\begin{figure}[h]
\centering
\psset{xunit=0.75,yunit=0.75,runit=0.75,linewidth=0.01}
\begin{pspicture}(5cm,5cm)

\rput(3,2){
\psellipse[linewidth=.015](0,0)(3.4,2.4)
\psellipse[linewidth=.03](0,0)(3,2)
\psellipse[linewidth=.04,linestyle=dashed](0.15,-0.15)(3,2)
\psellipse[linewidth=.015](0,0)(2.6,1.6)
\psline[linewidth=.015](1.35,-3)(1.35,3)
\psline[linewidth=.015](-1.35,-3)(-1.35,3)
\psline[linewidth=.015](4,1.35)(-4,1.35)
\psline[linewidth=.015](-4,-1.35)(4,-1.35)
\rput(1.52,1.55){\pscircle[fillstyle=solid,fillcolor=black](0,0){.08}}
\rput(1.63,-1.83){\pscircle[fillstyle=solid,fillcolor=black](0,0){.08}}
\rput(-1.7,-1.7){\pscircle[fillstyle=solid,fillcolor=black](0,0){.08}}
\rput(-1.52,1.52){\pscircle[fillstyle=solid,fillcolor=black](0,0){.08}}
\psline[linewidth=.015]{->}(3.2,2.2)(1.52,1.52)
\psline[linewidth=.015]{->}(-3.2,2.2)(-1.52,1.52)
\psline[linewidth=.015]{->}(-3.3,-2.2)(-1.7,-1.7)
\psline[linewidth=.015]{->}(3.3,-2.2)(1.63,-1.83)
\rput(3.4,2.5){$r\pp(\th_0)$}
\rput(-3.4,2.5){$r\lp(\th_0)$}
\rput(-3.4,-2.5){$r\mm(\th_0)$}
\rput(3.4,-2.5){$r\pl(\th_0)$}
\psline[linewidth=.015]{->}(0,3.2)(0,1.8)
\psline[linewidth=.015]{->}(0,-3.5)(0,-2.1)
\psline[linewidth=.015]{->}(4.7,-0.1)(3.1,-0.1)
\psline[linewidth=.015]{->}(-4.7,-0.1)(-2.8,-0.1)
\rput(0,3.5){$\CC_2^+$}
\rput(0,-3.8){$\CC_2^-$}
\rput(5.2,-0.1){$\CC_1^+$}
\rput(-5.3,-0.1){$\CC_1^-$}
}
\end{pspicture}
\vskip1.5cm
\caption{The curves $\CC_1^+,\CC_1^-,\CC_2^+$ and $\CC_1^-$}
\end{figure}

We set: 
\begin{align*}
\Hh_{1,\eps}^+:=\bigcup_{\th\in \T^2}\Cc_1^+(\th), & \qquad   \Hh_{1,\eps}^-:=\bigcup_{\th\in \T^2}\Cc_1^-(\th)\\
\Hh_{2,\eps}^+:=\bigcup_{\th\in \T^2}\Cc_2^+(\th), & \qquad   \Hh_{2,\eps}^-:=\bigcup_{\th\in \T^2}\Cc_2^-(\th).
\end{align*}
The four  sets above are $3$-dimensional manifolds with boundary. They cover $H_\eps\inv(\{1\})$. Since the boundary of each of them is the disjoint union  of two KAM tori $\Tt^{**}$, they are invariant under the flow $\phi_\eps$.
Therefore   a $\phi_\eps$-invariant surface contained in $H_\eps\inv(\{1\})$ is necessarily contained in one of these submanifolds. 
\vspace{0.1cm}

Let $\LL$ be a $\phi_\eps$-invariant surface  and let us see that $\LL$ must be a torus, which will prove 3.
Assume that $\LL\subset \Hh_{1,\eps}^+$.
By (\ref{torsionr_1}), for any $\th_1^0\in \T$, the $3$-dimensional submanifold $\ha{S}_{\th_1^0}:=\{\th_1=\th_1^0\}$ is transverse to $\Hh_{1,\eps}^+$. 
We denote by $S_{\th_1^0}^+$ the symplectic surface  $S_{\th_1^0,\eps}^+:=\ha{S}_{\th_1^0}\cap \Hh_{1,\eps}^+$. 
We can assume without loss of generality that $r_2\pl <r_2\pp$. 

\begin{nota} 1) In what follows, we will only work in $\Hh_{1,\eps}^+$. We will omit the subscript $+$ and will write $\Hh_{1,\eps}$. 

\noindent In the same way, we set $S_{\th_1^0,\eps}:=S_{\th_1^0,\eps}^+$ and $r_2^+:=r_2\pp,\: r_2^-:=r_2\pl$.

2) We denote by $\Hh_1$ the intersection $H\inv(\{1\})\cap (\T^2\times \{2ar_1+br_2>\frac{1}{4}\al\})$. There exist an interval $I_2$ and a function $R_1:I_2\rit \R$ such that:
\begin{equation}
\Hh_1 =\{(\th,R_1(r_2),r_2)\,|\,(\th,r_2)\in\T^2\times I_2\}.
\end{equation}
Obviously, $I_2\supset [r_2^-(\th),r_2^+(\th)]$ for all $\th\in\T^2$.
\end{nota}

\noindent One has:
\[
S_{\th_1^0,\eps}:=\{(\th_1^0,\th_2,R_1^+(\th_1^0,\th_2,r_2),r_2)\,|\, r_2\in [r_2^-(\th_1^0,\th_2),r_2^+(\th_1^0,\th_2)]\},
\]
that is, $S_{\th_1^0,\eps}$ is parametrized by $(\th_2,r_2)\in \T\times [r_2^-(\th_1^0,\th_2),r_2^+(\th_1^0,\th_2)]$.

Since for all $\th_1^0\in\T$, $\LL$ is transverse to $S_{\th_1^0,\eps}$ in $\Hh_{1,\eps}$,\:  $\LL\cap S_{\th_1^0,\eps}$ is a  
 closed $1$-dimensional submanifold  $\CC(\th_1^0)$ possibly non connected.
Assume that $\CC(\th_1^0)$ is a finite union of circles $\Ga_1,\dots, \Ga_m$.  
The Poincaré return map $\wp_\eps:S_{\th_1^0,\eps}\rit S_{\th_1^0,\eps}$ with respect to the flow $\phi_\eps$  is well defined by (\ref{torsionr_1}). 
Necessarily, for any $q\in\{1,\dots,m\}$, there exists $p\neq q$ in $\{1,\dots,m\}$ such that $\wp_{\eps}(\Ga_q)\subset\Ga_p$. Since conversely, $\wp_{\eps}\inv(\Ga_q)\subset\Ga_p$, one has $\wp_{\eps}(\Ga_p)=\Ga_q$. 
We set 
\[
\wp_{\eps}(x)= \phi_\eps^{\tau(x)}(x).
\]
Observe that the map 
\[
\begin{matrix}
[0,1]\times \Ga_q & \rit & \LL\\
 (t,z) &\ma &  \phi_\eps^{t\tau(z)}(z)
\end{matrix}
\]
is a homotopy.

Now, for any $z\in \Ga_1$, $\phi_\eps(m\tau(z),z)\in \Ga_1$. Hence, the map
\[
\begin{matrix}
[0,1]\times \Ga_1 & \rit & \LL\\
 (t,z) &\ma &  \phi_\eps^{tm\tau(z)}(z)
\end{matrix}
\]
is surjective and $\LL$ is diffeomorphic to the quotient 
\[
[0,1]\times \Ga_1/\{(0,\phi(m\tau(z),z)=(1,z)\}.
\]
 Since the diffeomorphism $z\ma \phi_\eps(m\tau(z),z)$ is homotopic to the identity, $\LL$ is a torus and 3 is proved.
\vspace{0.1cm}


In remains to prove 2. Fix a $\phi_\eps$-invariant torus $\Tt$ in  $\Hh_{1,\eps}$. Assume that $\Tt$ is homotopic to $\T^2\times\{0\}$. 
For $\th_1\in \T$, we set $\CC(\th_1)=\Tt\cap S_{\th_1,\eps}$. We have already seen that $\CC(\th_1)$ is a finite union of circles $\Ga_1,\dots,\Ga_m$.

Observe that for all $\th_1^0,\th_1^1$ in $\T$, there exists a Poincaré map $P_{\th_1^0,\th_1^1}$ between the surfaces $S_{\th_1^0,\eps}$ and $S_{\th_1^1,\eps}$.
As before, one  checks that $P_{\th_1^0,\th_1^1}(\CC(\th_0^1))= \CC(\th_1^1)$ and that $P_{\th_0^1,\th_1^1}$ leads to a homotopy between $\CC(\th_0^1)$ and $\CC(\th_1^1)$.
Thus, all the submanifolds $\CC(\th_1^0)$ are homotopic (and in particular homologous) in $\Tt$.
Let us denote by $[\Ga]$ the common homology class (in $\Tt$) of the circles $\Ga_k$. Set $\CC:=\T\times\{0\}\subset\T^2$.  Clearly, $[\Ga]$ and $[\CC]$ are independent in $H_1(\Tt,\Z)$.
Since $\Tt$ is homotopic to $\T^2\times\{0\}$, the circles $\Ga_k$ must be essential in the cylinder $(\th_2,r_2)\in \T^2\times [r_2^-(\th_1^0,\th_2),r_2^+(\th_1^0,\th_2)]$, otherwise $\Tt$ would be homotopic to the curve $\{(m\th_1,0)\,|\,\th_1\in\T\}\times\{0\}\subset \T^2\times\{0\}$.


For $1\leq k\leq m$, we denote by $\Ii_k$ the domain in $S_{\th_1^0,\eps}$ bounded $\Ga_k$ and the lower boundary $\{(\th_2,r^{-}(\th_1^0,\th_2)\,|\,\th_2\in \T\}$.  
Now $\wp_\eps$ is symplectic and in particular preserves the areas, so all the $\Ii_k$ have the same area, that is, all the $\Ga_k$ coincide and the intersection $\CC(\th_1^0)$ between $\Tt$ with $S_{\th_1^0,\eps}$ is a single essential circle.
%

Let $L_\eps : S_{\th_1^0,\eps}\rit \T\times I: (\th_2,r_2)\ma (\th_2, r_2-r_2^-(\th_1^0,\th_2)+r_2^-)$ and set
\[
\begin{array}{llll}
\ha{\wp}_\eps & : \T\times \II & \longrightarrow &\: \T\times\II\\
& (\th_2,r_2) &\longmapsto &\: L_\eps\inv\circ\wp_\eps\circ L_\eps(\th_2,r_2),
\end{array}
\]
where $\T\times \II$ is the cylinder contained in $\T\times I$ whose boundaries are $\T\times\{r_2^-\}$ and the graph of the function $\th_2\ma r_2^+(\th_1^0,\th_2)-r_2^-(\th_1^0,\th_2)+r_2^-$.

We will apply Birkhoff's Theorem to  $\ha{\wp}_\eps$ to see  that $\CC(\th_1^0)$ is the graph of a function  $\T\rit \II$. 
Obviously, $\ha{\wp}_\eps$ preserves the symplectic form $d\th_2\wedge dr_2$ and the boundary $\T\times\{r_2^-\}$. One just has to check the torsion condition. To do this, we will see that $\ha{\wp}_\eps$ is $\sqrt{\eps}$-close in $C^1$-topology to the twist map defined by the Poincaré return map $\wp$ (with respect to $\phi$) associated with the surface $S_{\th_1^0}=\ha{S}_{\th_1^0}\cap \Hh_1$.
We first observe that there exists $\de>0$ independent of $\eps$, such that $|L_\eps -\Id|_{C^1,\T^2\times\II}\leq \de\sqrt{\eps}$.
One has:
\[
S_{\th_1^0}:=\{(\th_1^0,\th_2,R_1(r_2),r_2)\,|\, (\th_2,r_2)\in \T\times I_2\}.
\]
The Poincaré map $\wp$ reads:
\[
\wp(\th_2,r_2)= \left(\th_2+\frac{cR(r_2)+2br_2}{2aR(r_2)+cr_2},r_2\right)=(\wp_1(\th_2,r_2),\wp_2(\th_2,r_2)).
\]
Hence:
\[
 \Dp{\wp_1}{r_2}=\frac{(4ab-c^2)(R(r_2)-R'(r_2)r_2)}{(2aR(r_2)+cr_2)^2}.
\]
Using the fact that $r_2\ma aR(r_2)^2+br_2^2+cR(r_2)r_2$ is a constant function, one gets:
\[
R'(r_2)=-\frac{2br_2+cR(r_2)}{2aR(r_2)+cr_2}=-\frac{2br_2+cr_1}{2ar_1+cr_2}.
\]
Thus:
\[
 \Dp{\wp_1}{r_2}=-\frac{4ab-c^2}{(2aR(r_2)+cr_2)^2}\frac{r_1(2br_2+cr_1)+r_2(2ar_1+cr_2)}{2ar_1+cr_2}.
\]
Since $(r_1,r_2)\in \DD_1^+$, $2ar_1+cr_2>0$. Now $4ab-c^2=4\det h>0$. Finally, $r_1(2br_2+cr_1)+r_2(2ar_1+cr_2)=\langle r, n(r)\rangle$ where $n(r)$ is the normal vector pointing outwards the ellipse $\Hh$. Since $\Hh$ is convex, this scalar product  has constant sign. We can assume without loss of generalities, that $\langle r, n(r)\rangle>0$.
Thus, $\wp$ is an area-preserving twist map.  Its return-time map $\tau: (\th_2,r_2)\ma 2aR_1(r_2)+cr_2$ only depends on $r_2$. 

For $(\th_2,r_2)\in S_{\th_1^0,\eps}$, the return-time map $\tau_\eps(\th_2,r_2)$ is defined by:
\[
\int_0^{\tau_\eps(\th_2,r_2)}\dot{\th}_1(s)ds=\int_0^{\tau(\th_2,r_2)}2ar_1(s)+cr_2(s)+\eps\Dp{f}{r_1}(\th(s),r(s))ds=1.
\]
One easily checks that $||\tau-\tau_{\eps}||_{C^1,\T\times\II}\leq c\eps$ for  a suitable constant $c>0$ independent of $\eps$.

Set $J:=[0,\max_{\T^2\times\II}(\tau,\tau_\eps)]$ and $K:=J\times(\T\times \II)$. We denote by $\Phi$ and $\Phi_\eps$ the maps defined on $K$ by $\Phi(t,(\th,r))=\phi^t(\th,r)$ and $\Phi_\eps(t,(\th,r))=\phi_\eps^t(\th,r)$.
By the Gronwall lemma, there exists $k>0$ such that:
\[
||\Phi-\Phi_\eps||_{C^1(K)}\leq k||X-X_\eps||_{C^1(K)}.
\]
Therefore there exists $\ga>0$ independent of $\eps$, such that:
\begin{multline}
||\wp-\ha{\wp}_\eps||_{C^1(\T\times\II)}\leq \ga\sup\bigg(||X-X_\eps||_{C^1(K)}+||\tau-\tau_\eps||_{C^1(J)}\\
+||\Id-L_\eps\inv||_{C^1(\T\times\II)}+||\Id-L_\eps||_{C^1(\T\times\II)}\bigg),
\end{multline}
that is, there exists $\ga'$ independent of $\eps$, such that:
\[
||\wp-\ha{\wp}_\eps||_{C^1}\leq \ga'\sqrt{\eps}.
\]
Then for $\eps$ small enough, $\ha{\wp}_\eps$ satisfies the torsion condition by remark \ref{twisttorsion}. Therefore we can apply Birkhoff's Theorem and $\CC(\th_1^0)$ is the graph of a Lipschitz fonction $R_{\th_1^0}:\T\rit \R^2$.
As a consequence, 
\begin{align*}
\LL:=\bigcup_{\th_1\in\T}\CC(\th_1) & =\bigcup_{\th_1\in\T}\{(\th_2,R_{\th_1}(\th_2))\,|\,\th_2\in\T\} \\
 &=\{(\th_1,\th_2,R(\th_1,\th_2))\,|\,(\th_1,\th_2)\in\T^2\}.
\end{align*}
and  $R:(\th_1,\th_2)\ma R(\th_1,\th_2)$ is continuous.
The same argument holds true in each of the domains $\Hh_{1,\eps}^-, \Hh_{2,\eps}^-$ and $\Hh_{2,\eps}^+$, which concludes the proof.
\end{proof}

\bibliographystyle{alpha}
\bibliography{biblioflat}

\def\cprime{$'$}
\begin{thebibliography}{BBM10}

\bibitem[AKN06]{AKN}
Vladimir~I. Arnold, Valery~V. Kozlov, and Anatoly~I. Neishtadt.
\newblock {\em Mathematical aspects of classical and celestial mechanics},
  volume~3 of {\em Encyclopaedia of Mathematical Sciences}.
\newblock Springer-Verlag, Berlin, third edition, 2006.
\newblock [Dynamical systems. III], Translated from the Russian original by E.
  Khukhro.

\bibitem[Arn99]{ArnMM}
V.~I. Arnol{\cprime}d.
\newblock {\em Mathematical methods of classical mechanics}, volume~60 of {\em
  Graduate Texts in Mathematics}.
\newblock Springer-Verlag, New York, 199?
\newblock Translated from the 1974 Russian original by K. Vogtmann and A.
  Weinstein, Corrected reprint of the second (1989) edition.

\bibitem[BBM10]{BBM-10}
A.~V. Bolsinov, A.~V. Borisov, and I.~S. Mamaev.
\newblock Topology and stability of integrable systems.
\newblock {\em Uspekhi Mat. Nauk}, 65(2(392)):71--132, 2010.

\bibitem[BCG95]{BCG-95}
G.~Besson, G.~Courtois, and S.~Gallot.
\newblock Entropies et rigidit\'es des espaces localement sym\'etriques de
  courbure strictement n\'egative.
\newblock {\em Geom. Funct. Anal.}, 5(5):731--799, 1995.

\bibitem[BCG96]{BCG-96}
G.~Besson, G.~Courtois, and S.~Gallot.
\newblock Minimal entropy and {M}ostow's rigidity theorems.
\newblock {\em Ergodic Theory Dynam. Systems}, 16(4):623--649, 1996.

\bibitem[BI94]{BI-94}
D.~Burago and S.~Ivanov.
\newblock Riemannian tori without conjugate points are flat.
\newblock {\em Geom. Funct. Anal.}, 4(3):259--269, 1994.

\bibitem[Fat]{F}
A.~Fathi.
\newblock {\em The weak {KAM} theorem in {L}agrangian dynamics}.

\bibitem[Fom88]{F-88}
A.~T. Fomenko.
\newblock {\em Integrability and nonintegrability in geometry and mechanics},
  volume~31 of {\em Mathematics and its Applications (Soviet Series)}.
\newblock Kluwer Academic Publishers Group, Dordrecht, 1988.
\newblock Translated from the Russian by M. V. Tsaplina.

\bibitem[Kat88]{Kat}
A.~Katok.
\newblock Four applications of conformal equivalence to geometry and dynamics.
\newblock {\em Ergodic Theory Dynam. Systems}, 8$^*$(Charles Conley Memorial
  Issue):139--152, 1988.

\bibitem[KH95]{HK-95}
A.~Katok and B.~Hasselblatt.
\newblock {\em Introduction to the modern theory of dynamical systems},
  volume~54 of {\em Encyclopedia of Mathematics and its Applications}.
\newblock Cambridge University Press, Cambridge, 1995.
\newblock With a supplementary chapter by Katok and Leonardo Mendoza.

\bibitem[L]{L-1}
C~Labrousse.
\newblock Polynomial growth of the volume of balls for zero entropy geodesic
  systems.

\bibitem[LM]{LM}
C.~Labrousse and J-P.~Marco.
\newblock Polynomial entropies for Bott nondegenerate Hamiltonian systems.

\bibitem[Man79]{M-79}
A~Manning.
\newblock Topological entropy for geodesic flows.
\newblock {\em Ann. of Math.}, 110:567--573, 1979.

\bibitem[Mar93]{Mar-93}
J.-P. Marco.
\newblock Obstructions topologiques à l'intégrabilité des flots géodésiques en
  classe de bott.
\newblock {\em Bull. Sc math.}, 117:185--209, 1993.

\bibitem[Mar09]{Mar-09}
J.-P. Marco.
\newblock Dynamical complexity and symplectic integrability.
\newblock ArXiv, 2009.

\bibitem[Pat91]{Pat-dim4}
G.~Paternain.
\newblock Entropy and completely integrable {H}amiltonian systems.
\newblock {\em Proc. Amer. Math. Soc.}, 113(3):871--873, 1991.

\end{thebibliography}

\end{document}